\documentclass[a4paper,10pt]{amsart}
\usepackage[utf8]{inputenc}
\usepackage[english]{babel}

\usepackage{amsfonts}

\usepackage[english]{babel}
\usepackage{amsmath,amssymb,verbatim,mathrsfs,latexsym,paralist,graphics, enumerate}

\usepackage{amsfonts,mathtools,url, enumerate} 
\usepackage{subcaption}
\usepackage{float}
\usepackage{tikz-cd}
\usepackage{hyperref}
\usepackage{indentfirst}
\usepackage{multirow}
\usepackage[a4paper]{geometry}
\usepackage{amsthm}
\allowdisplaybreaks[4]
\usepackage{soul}
\usepackage{ tipa }
\usepackage{tikz}

\usepackage{enumitem}
\usepackage{url} 
\usepackage[all]{xy}

\newcommand{\bC}{\mathbf{C}}
\newcommand{\bD}{\mathbf{D}}

\newcommand{\Mat}{\mathbf{Mat}}
\newcommand{\Graph}{\mathbf{Graph}}

\newcommand{\LF}{\mathbf{IMat}}

\newcommand{\Ind}{\mathrm{Ind}}

\DeclareMathOperator{\Hom}{\mathrm{Hom}}

\newcommand{\Z}{\mathbb{Z}}

\newcommand{\N}{\mathbb{N}}

\newcommand{\mrm}{\mathrm}

\newcommand{\mc}{\mathcal}

\newcommand{\ob}{\mathrm{ob}}
\newcommand{\Id}{\mathrm{Id}}
\newcommand{\cS}{\mathcal{S}}
\newcommand{\cF}{\mathcal{F}}
\newcommand{\cC}{\mathcal{C}}
\newcommand{\tc}{\mathrm{tc}}

\newcommand{\bigslant}[2]{{\raisebox{.2em}{$#1$}\left/\raisebox{-.2em}{$#2$}\right.}}

\numberwithin{equation}{section}
\newtheorem{theorem}{Theorem}[section]
\newtheorem{thm}{Theorem}
\newtheorem{proposition}[theorem]{Proposition}

\newtheorem{lemma}[theorem]{Lemma}
\newtheorem{corollary}[theorem]{Corollary}
 
\newtheorem{defn}[theorem]{Definition}    
\newtheorem{question}[theorem]{Question}
\theoremstyle{remark}
 
\theoremstyle{remark}
\newtheorem{rmk}[theorem]{Remark}
\newtheorem{example}[theorem]{Example}

\definecolor{bunired}{rgb}{0.8, 0.0, 0.0}
\definecolor{caribbeangreen}{rgb}{0.0, 0.8, 0.6}

\title{On the K-theory of  matroids with Tutte coverings}
\author{Luigi Caputi}
\address{Dipartimento di Matematica, Universit\`a di Bologna, via Zamboni 33, Bologna, IT}
\email{luigi.caputi@unibo.it}
\author{Sabino Di Trani}
\address{Dipartimento di Ingegneria e Scienze dell' Informazione e Matematica, Università degli Studi dell'Aquila, Via Vetorio, L'Aquila, IT}
\email{sabino.ditrani.math@gmail.com}
\date{}

\begin{document}

\maketitle

\begin{abstract}
    The aim of this work is to explicitly compute the $K$-theory of the category of matroids with  respect to the covering family of Tutte coverings. In particular, we show that this is equivalent to the $K$-theory spectrum of the category of graphic matroids on looped forests, with the covering family generated by  isomorphisms.  Further, we show that this yields an equivalence of $C_2$-spectra.
\end{abstract}

\section*{Introduction}

 Matroids provide a combinatorial framework that unifies several notions of independence arising in algebra, geometry, and graph theory. The most common numerical and algebraic invariants of matroids satisfy recursive descriptions in terms of deletion and contraction operations. Tutte-Grothendieck invariants play a fundamental role in this regard, providing universal invariants compatible with the deletion-contraction relations. In fact, the Tutte polynomial is a central object in matroid theory, encoding several important specializations, including the chromatic polynomial, the flow polynomial, and the characteristic polynomial (see \cite[Chapter 6]{WhiteMatApp} for a complete exposition). 

In recent years, understanding  polynomial invariants related to the Tutte polynomial from a categorical and homotopical perspective has gained increasing attention. An example  is given by the process of categorification of the Tutte polynomial of a graph, that describes such a polynomial as a graded Euler characteristic~\cite{jasso2006categorification}. Among others, there are also categorifications of the chromatic polynomial~\cite{Helme-Guizon-Rong}, of the dichromatic polynomial~\cite{05313438}, and of the signed chromatic polynomial of graphs~\cite{zbMATH07555134}. Related ideas appear in recent approaches to categorify classical matroid invariants, such as the characteristic polynomial~\cite{saito2024categorificationcharacteristicpolynomialmatroids}. Another direction is to use homotopic theoretic methods. The framework of categories with covering families, introduced in~\cite{zbMATH07985809}, has provided a natural setting for defining a $K$-theory spectrum associated to combinatorial structures as graphs, posets or matroids.  Recent developments in this direction include the work~\cite{calle2024combinatorial} on the edge reconstruction conjecture in graph theory, and the study of categorical realizations of Tutte-type recursions~\cite{lopez2025realizing}, which both inspired this work. 

The deletion--contraction recursion satisfied by the Tutte polynomial  suggests a natural covering family structure on the category of matroids. Roughly speaking, a matroid $M$ can be covered by the pair of minors $M\setminus e$ and $M/e$ associated to a non-degenerate element $e$. Iterating this process produces a deletion--contraction tree whose leaves correspond to matroids obtained by successive deletions and contractions. Following~\cite{lopez2025realizing}, these trees define the \emph{Tutte coverings} of a matroid and determine a covering family structure. This yields a category with covering families, denoted $\Mat^{\tc}$.
The goal of this paper is to compute the  $K$-theory spectrum associated to this category. Our main result is the following:
\begin{thm}[cf.~Corollary~\ref{cor:splitting}]\label{thmintro}
      There is an equivalence of spectra
    \[
    K(\Mat^{\tc})\simeq \bigvee_{m,n\in\mathbb{N}} \Sigma_+^{\infty} B(S_m)\wedge \Sigma_+^{\infty} B(S_n)
    \ , \]
    where $S_p$ denotes the symmetric group on $p$ elements.
\end{thm}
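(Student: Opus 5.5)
The plan is to follow the strategy sketched in the abstract: reduce $K(\Mat^{\tc})$ to the $K$-theory of a category whose covering family is generated only by isomorphisms, and then use the known computation of $K$-theory for such categories. In the framework of categories with covering families of~\cite{zbMATH07985809}, the $K$-theory of a category $\bC$ whose only coverings are generated by isomorphisms splits as a wedge, over isomorphism classes of objects, of suspension spectra $\Sigma^\infty_+ B\mathrm{Aut}(X)$. This is the analogue of $K$ of a groupoid, i.e.\ of the free $E_\infty$-space on the core. So the theorem follows from three things:
\begin{itemize}
\item an equivalence $K(\Mat^{\tc})\simeq K(\LF^{\mathrm{iso}})$, where $\LF$ is the category of graphic matroids of looped forests;
\item a classification of isomorphism classes in $\LF$;
\item the automorphism groups of these matroids.
\end{itemize}

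First I would compute the "leaves" of Tutte coverings. Iterated deletion–contraction stops exactly at matroids all of whose elements are degenerate, that is, loops or coloops. Such a matroid is determined up to isomorphism by the pair $(m,n)$, where $m$ is the number of coloops and $n$ the number of loops. It is the direct sum $U_{m,m}\oplus U_{0,n}$, which is the graphic matroid of a forest with $m$ edges and $n$ loops attached. Its automorphism group (as a matroid, i.e.\ bijections of the ground set preserving independence) is $S_m\times S_n$, since loops and coloops must be permuted among themselves and any such permutation works. Hence
\[
\Sigma^\infty_+B(S_m\times S_n)\simeq \Sigma^\infty_+BS_m\wedge\Sigma^\infty_+BS_n .
\]
Summing over $(m,n)\in\N^2$ gives the stated wedge.

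The main step, and the expected obstacle, is the comparison $K(\Mat^{\tc})\simeq K(\LF^{\mathrm{iso}})$. I would apply the dévissage/cofinality-type theorem available for categories with covering families (as used in~\cite{calle2024combinatorial,lopez2025realizing}). The inclusion $\LF\hookrightarrow\Mat$ should be shown to satisfy two conditions:
\begin{enumerate}
\item every matroid admits a Tutte covering by objects of $\LF$ (a full deletion–contraction tree), which gives essential surjectivity on coverings;
\item any two such refinements are connected through common refinements, the key input being that the multiset of leaves of a full deletion–contraction tree is independent of the order of elements chosen, i.e.\ the Tutte polynomial is well defined.
\end{enumerate}
Moreover, the only Tutte coverings of a leaf are trivial, since it has no non-degenerate element, so the restricted covering family on $\LF$ is generated by isomorphisms.

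The delicate point is making sure the coverings interact correctly with automorphisms. Morphisms in $\Mat$ may identify leaves only up to isomorphism, so one must check that the refinement condition holds equivariantly. I would try to verify the hypotheses of the dévissage theorem directly, by constructing for each covering a canonical common refinement via a fixed total order on ground sets.
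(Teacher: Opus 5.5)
\textbf{Comparison with the paper.} Your strategy is the paper's strategy: indecomposable covers via Proposition~\ref{prop:exindec}, refinements via Proposition~\ref{prop:indareiso}, then Theorem~\ref{thm:devissage}, Proposition~\ref{prop:isos}, $\mathrm{Aut}(U_{1,1}^{\oplus m}\oplus U_{0,1}^{\oplus n})\cong S_m\times S_n$, and the smash splitting. Your classification of the leaves, the automorphism groups and the splitting are all fine.

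\textbf{The gap.} The step you flagged as delicate does not go through: $\Mat^{\tc}$ does \emph{not} have refinements in the sense needed for Theorem~\ref{thm:devissage}, where each $f_i$ must factor as $g_j\circ h$ \emph{over} $M$. No choice of total orders can repair this.

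Take $M=U_{1,2}$ on $\{a,b\}$; both elements are non-degenerate.
\begin{itemize}
\item Splitting at $a$ gives the Tutte covering $\{M\setminus a\to M,\ M/a\to M\}$. Both maps have image $\{b\}$.
\item Splitting at $b$ gives a covering whose maps have image $\{a\}$.
\item Replacing a tree by a naturally isomorphic one moves all leaf images by one common isomorphism. So every nontrivial Tutte covering of $U_{1,2}$ has all its leaves on the same one-element image.
\end{itemize}
A common refinement $\{C_k\to M\}$ of the two coverings would need each $C_k\to M$ to factor through maps with images $\{b\}$ and $\{a\}$. Matroid morphisms are injective on ground sets, so each $C_k$ would have empty ground set. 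But no Tutte covering of $M$ has an empty domain: only non-degenerate elements are ever removed, and a one-element matroid has none.

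Well-definedness of the Tutte polynomial (Proposition~\ref{prop:indareiso}) only gives \emph{abstract} isomorphisms between leaves, here $U_{1,1}$ on $\{b\}$ versus $U_{1,1}$ on $\{a\}$. These do not commute with the maps to $M$. The paper's proof of Corollary~\ref{cor:refinements} makes exactly this move without checking $f'_j=g'_i\circ h_j$, so it does not supply the missing argument either.

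\textbf{Why this is not a technicality.} Consider the component of $W(\Mat^{\tc})$ containing $\{U_{1,2}\}$. Up to equivalence it consists of:
\begin{itemize}
\item the object $\{U_{1,2}\}$, with automorphism group $C_2$;
\item the object $\{U_{1,1},U_{0,1}\}$, with trivial automorphisms;
\item two morphisms from the latter to the former, exchanged freely by $C_2$.
\end{itemize}
Its classifying space is therefore $BC_2$, whereas the corresponding component of $W(\LF^{\cong})$ is a point. Likewise, the category of indecomposable Tutte coverings of $U_{1,2}$ with isomorphisms over $U_{1,2}$ has two components; it is not contractible, which is what refinements would give.

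Consider also the truncated category with covering families containing only copies of $U_{1,2}$, $U_{1,1}$ and $U_{0,1}$. It satisfies every hypothesis of d\'evissage except refinements. Fix a reference element in each copy of $U_{1,2}$, and record mod $2$ whether an isomorphism or a split respects it. This defines a monoidal $\mathbb{Z}/2$-valued cocycle on $W$. It shows that, in that truncated category, the swap of $U_{1,2}$ gives a class in $K_1$ outside the image of the indecomposables.

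So to reach the stated equivalence you would need a genuinely different argument (or a different covering family) showing that this extra structure dies after group completion in the full category. As it stands, the reduction $K(\LF^{\cong})\simeq K(\Mat^{\tc})$ is established neither by your plan nor by the paper's proof.
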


The key observation is that every matroid admits a Tutte covering whose domains are \emph{indecomposable} matroids (see Definition~\ref{def:idencmatroids}). Such matroids have a particularly simple structure: they are precisely the direct sums of loops and coloops, and can be identified with graphic matroids associated to looped forests~\cite{lopez2025realizing}. 
Using this description, we show that the category of matroids with Tutte coverings  $\Mat^{\tc}$ has refinements (see Proposition~\ref{cor:refinements}). This allows us to apply the D\'evissage theorem for categories with covering families of~\cite{calle2024combinatorial}. As a consequence, the computation of the $K$-theory of $\Mat^{\tc}$ can be reduced to the computation of the $K$-theory of the full subcategory of indecomposable matroids, with respect to the covering family given by isomorphisms of matroids. This gives Theorem~\ref{thmintro}.

In view of Theorem~\ref{thmintro}, the recursive combinatorics underlying the Tutte polynomial  investigated in~\cite{lopez2025realizing} translates into a homotopy-theoretic description of the associated $K$-theory spectrum. We further show that the equivalence on $K$-theory is an equivalence of $C_2$-spectra (see Corollary~\ref{cor:dual}) and that this can be done also for graphic matroids, seen as a full category of $\Mat^{\tc}$ (see Theorem~\ref{thm:iso}).

\subsection*{Acknowledgements}
LC~was supported by the Starting Grant 101077154 ``Definable Algebraic Topology'' from the European Research Council of Martino Lupini. SDT is supported by the PRIN Project 20223FEA2E - ``Cluster algebras and Poisson Lie groups". The authors acknowledge partial support from INdAM-GNSAGA. Furthermore, the authors would like to sincerely thank the organizers and  all the participants (frogs included) in the conference ``Collaborations in Algebra, Representation Theory, and Ethics'' in Lyon, where this work started: thank you for reminding us what a good idea it is to take CARE of each other. 

 \section{Matroids and their Algebraic Invariants}
In this sectionwe recall some basic facts of matroids needed in the follow-up. We refer to \cite{OxMat} and \cite{WhiteMaTh} for  complete references.
 
Recall taht a (finite) \emph{matroid} $M$ is a pair $(X, \mc{I})$, where $X$ is a (finite) set and $\mc{I}$ is  a family of independent sets for~$X$, i.e.~a family of subsets of $X$ that satisfies the independence axioms: 
\begin{enumerate}[label={\rm (I\arabic*)},itemsep=0.11cm]
\item  the empty set is in $\mc{I}$;
\item  if $A\in \mc{I}$ and $B \subset A$, then $B \in \mc{I}$;
\item \label{I3}
if $A,B \in \mc{I}$ and $|A| > |B|$ then there exists $x \in A$ such that $B \cup \{x\} \in \mc{I}$.
\end{enumerate}
The set $X$ is the \emph{ground set} of~$M$. A maximal independent set is called a \emph{basis}. 
  
Given a matroid  $M=(X,\mc{I})$, its \emph{rank function}  $r_M\colon \mc{P}(X) \rightarrow \mathbb{N}_{\geq 0}$ is the function which associates to each $S\subseteq X$ the size of a maximal independent set contained in $S$. 

\begin{example}[Uniform Matroids]\label{ex:uniform}
Let $X$ be a finite set with $n$ elements and $k \leq n$ a non negative integer. The family 
$\mc{I}=\{ I \subset X \, | \, |I|\leq k \}$ satisfies the axioms of a family of indipendents. The matroid $M=(X, \mc{I})$ is the uniform matroid of subsets of cardinality $k$ in a set with $n$ elements. It is denoted by $U_{k,n}$. 
\end{example}

\begin{example}[Graphic Matroids]\label{ex:graphic}
Let $G$ be a finite undirected graph, and $\mc{C}$ the set of its (undirected) cycles. Identifying a subgraph of $G$ with the set of its edges, the set 
 \[ \mc{I}_{\mc{C}} = \{ S \subseteq E(G) \mid C \nsubseteq S,\text{ for each }C\in\mc{C} \}\ \]  satisfies the independence axioms, where $E(G)$ denotes the set of edges of $G$,.
The pair $M_G=(E(G), \mc{I}_{\mc{C}})$ is the graphic matroid associated to $G$.
\end{example}

Let $M_1=(X_1,\mc{I}_1)$ and $M_2=(X_2,\mc{I}_2)$ be two matroids. A matroid morphism $\varphi\colon M_1 \rightarrow M_2$ is an injective map $\varphi\colon X_1 \rightarrow X_2$ such that $\varphi(I) \in \mc{I}_2$ for every $I\in \mc{I}_1$. Matroids and morphisms of matroids yield a category that we denote by $\Mat$.
The matroids $M_1=(X_1, \mc{I}_1)$ and $M_2=(X_2, \mc{I}_2)$ are isomorphic, and in such case we write $M_1\cong M_2$, if there exists a bijective map $f \colon X_1 \rightarrow X_2$ such that  $f^{-1}(I) \in \mc{I}_1$ if and only if $I \in \mc{I}_2$.
\begin{defn}[Direct Sum Matroid]\label{defn:summat} Let $M_1=(X_1, \mc{I}_1)$ and $M_2=(X_2, \mc{I}_2)$ be  matroids. The direct sum $M_1 \oplus M_2$ is the matroid having ground set $X_1 \sqcup X_2$ and $\mc{I} = \{ I_1 \sqcup I_2 \mid I_1\in \mc{I}_1, I_2 \in \mc{I}_2\}$ as family of independent sets.
\end{defn}

Let $M=(X, \mc{I})$ be a matroid and $B$ the set of its bases. 
Set 
\[B^*=\{ X \setminus J \,|\, J \in B\} \ , \]
\[I^*=\{A \in X \, | \, A \subset J \mbox{ for certain } J \in B^*  \}\ .\]
Observe that the set $I^*$ satisfies the axioms of family of indipendent sets for a matroid.

\begin{defn}[Dual Matroid]\label{defn:dual}  Let $M=(X, \mc{I})$ be a matroid. The matroid $M^*=(X,I^*)$ is the \emph{dual matroid} of $M$.
\end{defn}

We will use the following terminology;
\begin{defn}\label{def:terminologye}
Let $M=(X,I)$ be a finite matroid and $e\in X$. 
\begin{itemize}
    \item $e$ is a \emph{loop}  if $\{e\}$ is not an independent set;
    \item $e$ is a \emph{coloop} (or an isthmus) of $M$ if $e$ is contained in every independent set;
    \item $e$ is \emph{non-degenerate} if it is not a loop nor a coloop for $M$. 
\end{itemize}  
\end{defn}

There are two  operations on matroids of importance in the follow-up: deletion and contraction. We  now recall the definitions.
Consider a matroid $M=(X,\mc{I})$ and $T$ any subset of $X$.
\begin{defn}[Deletion Matroid]\label{defn:deletionmat}The deletion of $M=(X,\mc{I})$ with respect to $T$ is the matroid $M \setminus T=(X', \mc{I'})$ defined by: 
\begin{enumerate}[label={\rm (DM\arabic*)},itemsep=0.11cm]
\item the ground set $X'$ is the set $X \setminus T$; 
\item the independent set $\mc{I'}$ is the sets of $I \in \mc{I}$ such that $I \subset X'$.
\end{enumerate}
\end{defn}

\begin{defn}[Contraction Matroid]\label{defn:contractionmat} The matroid $M / T$ is defined as the matroid $(M^* \setminus T ) ^*$
\end{defn}
\begin{rmk}\label{rmk:usualcontraction}
If $\{x\}$ is an independent for $M=(X,\mc{I})$ it is equivalent to say that $M / x$ is the matroid defined by the data: 
\begin{enumerate}[label={\rm (CM\arabic*)},itemsep=0.11cm]
\item the ground set $X$ is the set $X \setminus \{x\}$; 
\item the independent set $\mc{I}'$ is given by the sets $I \subset X \setminus \{x\}$ such that   $I \cup \{x\} \in \mc{I}$. 
\end{enumerate}
Otherwise, if $x$ is a loop for $M$, then $M/x=(X \setminus \{x\}, \mc{I})$. \end{rmk}

We recall the following property of deletions and contractions  -- see, e.g.~\cite[Proposition 3.1.25]{OxMat}: 
\begin{proposition}\label{prop:minorsareWD}
Let $M=(X, I)$ be a matroid and $T_1$, $T_2$ disjoint subsets of $X$. Then:
\begin{enumerate}
\item $(M \setminus T_1) \setminus T_2 = M \setminus (T_1 \cup T_2)= (M \setminus T_2) \setminus T_1$;
\item $(M / T_1) / T_2 = M / (T_1 \cup T_2)= (M / T_2) / T_1$;
\item $(M \setminus T_1) / T_2 =  (M / T_2) \setminus T_1$.
\end{enumerate}

\end{proposition}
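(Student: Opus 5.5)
The plan is to reduce all three identities to statements about independent sets, and to use duality to transfer the deletion statements to contraction. Part (1) follows directly from Definition~\ref{defn:deletionmat}. All three matroids have ground set $X\setminus(T_1\cup T_2)$. In each case the independent sets are those $I\in\mc{I}$ with $I\subseteq X\setminus(T_1\cup T_2)$: a subset of $X\setminus T_1$ that avoids $T_2$ is the same as a subset of $X$ avoiding $T_1\cup T_2$. This is symmetric in $T_1,T_2$.

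For part (2), I use Definition~\ref{defn:contractionmat} together with the fact that duality is an involution, $(M^*)^*=M$. This holds because the bases of $M^*$ are the complements of the bases of $M$, so complementing twice returns the original bases and hence the same independent sets. Then
\[
(M/T_1)/T_2=\bigl(((M^*\setminus T_1)^*)^*\setminus T_2\bigr)^*=\bigl((M^*\setminus T_1)\setminus T_2\bigr)^*=(M^*\setminus(T_1\cup T_2))^*=M/(T_1\cup T_2),
\]
using part (1) applied to $M^*$. Symmetry in $T_1,T_2$ again gives the last equality.

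Part (3) is the only part needing real work. By involutivity, $(M\setminus T_1)/T_2=\bigl((M\setminus T_1)^*\setminus T_2\bigr)^*$ and $(M/T_2)\setminus T_1=\bigl((M^*\setminus T_2)\setminus T_1\bigr)^*$. So it suffices to prove the key lemma
\[
(M\setminus T)^*=M^*/T .
\]
Granting the lemma, the first expression becomes $\bigl((M^*/T_1)\setminus T_2\bigr)^*$. I would then want to deduce the equality from this, but the argument risks becoming circular. To avoid that, I will instead compare bases directly.

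I will use the standard description of $M/T$: its bases are the maximal sets of the form $B\setminus T$, where $B$ is a basis of $M$ with $|B\cap T|=r_M(T)$. Equivalently, the bases of $M/T$ are the sets $B'\subseteq X\setminus T$ such that $B'\cup B_T$ is a basis of $M$, where $B_T$ is a fixed maximal independent subset of $T$. I would prove this characterization from the definition via duality. The bases of $M^*\setminus T$ are the maximal sets of the form $(X\setminus B)\setminus T$, which occur when $|B\cap T|$ is minimal, i.e. equal to $r_M(T)$. Taking complements inside $X\setminus T$ then gives the sets $B\setminus T$ with $|B\cap T|=r_M(T)$.

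With this in hand, both sides of (3) live on $X\setminus(T_1\cup T_2)$. Both have as bases the sets $B\setminus(T_1\cup T_2)$, where $B$ is a basis of $M$ that meets $T_2$ maximally, avoids $T_1$ as much as possible, and is chosen to be independent in $M\setminus T_1$. Concretely, I will show that both sides have bases of the form $B'$ with $B'\cup B_{T_2}$ a basis of $M\setminus T_1$, where $B_{T_2}$ is a maximal independent subset of $T_2$. This works because $r_{M\setminus T_1}(T_2)=r_M(T_2)$, since $T_1$ and $T_2$ are disjoint.

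The main obstacle is the bookkeeping in this basis characterization of contraction, specifically checking the maximality and rank conditions. I would handle it using rank functions: $r_{M/T}(A)=r_M(A\cup T)-r_M(T)$ and $r_{M\setminus T}(A)=r_M(A)$. With these formulas, (3) becomes the one-line identity
\[
r_M(A\cup T_2)-r_M(T_2)
\]
for both sides, for every $A\subseteq X\setminus(T_1\cup T_2)$. Since a matroid is determined by its rank function, this would complete the proof.
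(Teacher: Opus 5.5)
Your final argument is correct and is the standard one. The paper gives no proof, only a citation to Oxley. The textbook argument is yours: (1) from the definition of deletion, (2) from (1) by duality and $(M^*)^*=M$, and (3) by checking that both sides have rank function $A\mapsto r_M(A\cup T_2)-r_M(T_2)$ on $X\setminus(T_1\cup T_2)$.

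The one ingredient you assert rather than prove is the contraction rank formula $r_{M/T}(A)=r_M(A\cup T)-r_M(T)$. Since the paper defines $M/T$ as $(M^*\setminus T)^*$, this formula carries all the content of (3), so you should derive it. The quickest route is the dual rank formula $r_{M^*}(S)=|S|+r_M(X\setminus S)-r(M)$. It follows from $r_{M^*}(S)=|S|-\min_B|S\cap B|$, where $B$ ranges over bases of $M$. Apply it first to $M^*\setminus T$ on ground set $Y=X\setminus T$, and then to $M$:
\[
r_{M/T}(A)=|A|+r_{M^*}(Y\setminus A)-r_{M^*}(Y)=r_M(A\cup T)-r_M(T).
\]
Your basis description of $M/T$ would also yield the formula. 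But correct one slip there: the sets $(X\setminus B)\setminus T$ are largest when $|B\cap T|$ is \emph{maximal}, not minimal. The value $r_M(T)$ you give is that maximum.

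You should also delete the earlier detour, which contains a false step. The identity $(M/T_2)\setminus T_1=((M^*\setminus T_2)\setminus T_1)^*$ does not hold. By (1) its right side is $M/(T_1\cup T_2)$. Already $M=U_{1,2}$ on $\{a,b\}$ with $T_1=\{a\}$, $T_2=\emptyset$ makes $b$ a coloop on the left and a loop on the right. The correct expression is $((M^*\setminus T_2)/T_1)^*$. This is exactly why duality alone cannot give (3): it only shows that (3) for $(M,T_1,T_2)$ is equivalent to (3) for $(M^*,T_2,T_1)$.

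Likewise, $(M\setminus T)^*=M^*/T$ follows at once from the definition and involutivity, so it is not a key lemma. The paragraph about bases relative to a fixed $B_T$ would need an exchange argument, and it becomes unnecessary once you argue with rank functions.
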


    Let $M$ be a matroid. Any matroid obtained from $M$ via a sequence
of deletions and/or contractions is called a \emph{minor} of $M$.
 Observe that, in view of Proposition~\ref{prop:minorsareWD} the minors of a matroid are well defined objects.

\begin{rmk}\label{rmk:DCmorphisms} Let $M=(X,\mc{I}(M))$ be a matroid and $x \in X$. The inclusion of $X \setminus \{x\}$ induces two maps, 
\[ M \setminus x \rightarrow M \  \qquad \text{and} \qquad M / x \rightarrow M \ ,\]
that are matroid morphisms. In fact, for the first map, by definition we have $I \in \mc{I}(M \setminus x)$ if and only if $I$ is an independent set for $M$ contained in $X \setminus \{x\}$. The second map  is obviously defined if $x$ is a loop. If $x$ is not a loop, we have $I \in \mc{I}(M/x)$ if and only if $I \cup \{x\} \in \mc{I}(M)$; in particular $I \in \mc{I}(M)$. Furthermore, observe that if $T_1$ and $T_2$ are disjoint subset of $X$, then the ground set inclusion induces a morphism from $M'=M \setminus T_1 / T_2 $ to $M$.
\end{rmk}

For the class of graphic matroids, we have the following:
 \begin{proposition}[{\cite[3.1.2 \& 3.2.1]{OxMat}}]\label{prop:closuregraphs}
    Let $G$ be a graph and $M_G$ be the associated graph matroid. Let $T\subseteq E(G)$ a subset of edges of $G$. Then:
    \begin{enumerate}
        \item $M_G\setminus T=M_{G\setminus T}$;
        \item $M_G/T=M_{G/T}$.
    \end{enumerate}
    where $G\setminus T$ and $G/T$ denotes the classical deletion and contraction operations on graphs.
\end{proposition}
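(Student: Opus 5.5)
The plan is to reduce both statements to facts about the edge sets of graphs and then apply the matroid-side definitions edge by edge. Both $M_G\setminus T$ and $M_{G\setminus T}$ have ground set $E(G)\setminus T$. Likewise, both $M_G/T$ and $M_{G/T}$ have ground set $E(G)\setminus T$, since contracting the edges of $T$ in $G$ leaves exactly the remaining edges, possibly turning some of them into loops or parallel edges. So in each case it suffices to compare the families of independent sets on this common ground set.

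\textbf{Deletion.} By Definition~\ref{defn:deletionmat}, the independent sets of $M_G\setminus T$ are the subsets $S\subseteq E(G)\setminus T$ that contain no cycle of $G$. A cycle of $G$ contained in $S$ avoids $T$, so it is a cycle of $G\setminus T$. Conversely, every cycle of $G\setminus T$ is a cycle of $G$. Hence the two families of independent sets coincide, which proves (1).

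\textbf{Contraction.} By Proposition~\ref{prop:minorsareWD}(2), $M_G/T$ is obtained by contracting the elements of $T$ one at a time. By induction on $|T|$ it therefore suffices to treat $T=\{x\}$, using the description in Remark~\ref{rmk:usualcontraction}. If $x$ is a loop edge, then $G/x=G\setminus x$ as graphs, and $M_G/x=(X\setminus\{x\},\mc I)=M_G\setminus x$. So this case reduces to (1).

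If $x$ is not a loop, write $x=uv$ with $u\neq v$. We must show that for $S\subseteq E(G)\setminus\{x\}$, the set $S\cup\{x\}$ is acyclic in $G$ if and only if $S$ is acyclic in $G/x$, where $G/x$ is obtained by identifying $u$ and $v$.
\begin{itemize}
\item Suppose $S$ contains a cycle $C$ of $G/x$. Lifting $C$ to $G$ gives either a cycle of $G$, or a $u$--$v$ path. In the second case, adding $x$ closes it into a cycle. Either way $S\cup\{x\}$ contains a cycle of $G$.
\item Conversely, suppose $S\cup\{x\}$ contains a cycle $C$ of $G$. If $x\notin C$, then $C$ stays a closed walk without repeated edges in $G/x$, and so it contains a cycle. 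If $x\in C$, then $C\setminus x$ is a $u$--$v$ path, which becomes a cycle in $G/x$ (a loop if it has length one).
\end{itemize}
Hence the independent sets of $M_G/x$ and $M_{G/x}$ agree.

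The induction step also needs that contracting a set of edges of a graph in sequence gives $G/T$ independently of the order. This is standard and parallels Proposition~\ref{prop:minorsareWD}.

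The main obstacle is the bookkeeping in the lifting argument when $G/x$ acquires loops and parallel edges. There one must check that a single edge parallel to $x$ becomes a loop in $G/x$, and that this matches a $2$-cycle together with $x$ in $G$. Once cycles are understood to include loops (length one) and digons (length two), the cycle correspondence above handles these cases uniformly, and this establishes (2).
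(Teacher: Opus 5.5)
Your proof is correct. The paper gives no argument of its own and only cites Oxley; your route is essentially the standard proof given there: compare cycles directly for deletion, reduce contraction to a single edge by associativity, treat a loop edge as a deletion, and for a non-loop edge use the correspondence between cycles containing it in $G$ and cycles in $G/x$ obtained by identifying its endpoints.
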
 

By Proposition~\ref{prop:closuregraphs}, minors of graphic matroids are graphic matroids as well.

Let $M = (X,\mc{I})$ be a matroid with rank function $r$.
The \emph{corank} of a subset $A \subset X$ is the difference $z(A)\coloneqq r(M) -r(A)$. The difference $n(A)\coloneqq |A|-r(A)$ is called the \emph{nullity} of $A$. 
Using the corank and the nullity functions, we can now recall the definition of Tutte polynomial for a matroid. For more details we refer to \cite[Chapter~6]{WhiteMatApp}. 

\begin{defn}\label{def:Tutte} Let $M=(X,I)$ be a finite matroid. Then, its Tutte polyonomial is defined as: 
\[T_M(x,y) = \sum_{A \subseteq X} (x-1)^{z(A)}(y-1)^{n(A)} \ .\]
By convention, if $X = \emptyset $ then $T_M(x,y) = 1$.
\end{defn}

We recall that the Tutte polynomial  is well-behaved with respect to direct sums, 
\begin{equation}\label{prop:DSTutte}
    T_{M_1 \oplus M_2}(x,y)=T_{M_1}(x,y) \, T_{M_2}(x, y) \ ,
\end{equation}
and it can be recursively computed using  deletions and contractions. These properties of the Tutte polynomial are essentially the main motivation for investigating the category of matroids with Tutte coverings introduced in \cite{lopez2025realizing}, and that we are going to study in this work.

 \section{$K$-theory of categories with covering families}

 In this section we recall the notion of a category with covering families.
We follow~\cite{zbMATH07985809}.

\begin{defn}[{\cite[Definition~2.1]{zbMATH07985809}}]
Let $\bC$ be a small category. 
\begin{enumerate}
    \item A \emph{multi-morphism} in $\bC$ is an object $B\in\bC$ and a finite (possibly empty) family of maps in $\bC$ with target $B$:
    \[
    \{f_i\colon A_i\to B\}_{i\in I} \ .
    \]
  \item   A \emph{covering family structure } on $\bC$ is a collection of multi-morphisms, called the \emph{covering families},
such that:
\begin{enumerate}
    \item each singleton containing an identity map  $\{A=A\}$ is a covering family;
    \item given a covering family $\{g_j\colon B_j\to C\}_{j\in J}$ and, for each $j\in J$ a covering family $\{f_{ij}\colon A_{ij}\to B_j\}_{i\in I_j}$, then the collection of all composites $\{g_j\circ f_{ij}\colon A_{ij}\to C\}_{j\in J, i\in I_j}$ forms a covering family.
\end{enumerate}
\item A  \emph{category with covering families} is a small category $\bC$, a covering family structure on $\bC$, and a
distinguished basepoint object $\ast\in \bC$, such that:
\begin{enumerate}[label=(\roman*)]
    \item $\Hom(\ast,\ast)=\{1_\ast\}$ and $\Hom(c,\ast)=\emptyset$ if $c\neq \ast$;
    \item For every finite (possibly empty) set $I$, the family $\{\ast\to\ast\}_{i\in I}$ is a covering family.
\end{enumerate}
\end{enumerate}   
\end{defn}

A morphism of categories with covering families is a functor $F\colon \bC\to \mathbf{D}$ that preserves both the covering families and the basepoint; this means that $F(\ast_C)=\ast_D$ and $F$ sends covering families of $\bC$ to covering families of $\bD$. Categories with covering families and morphisms of categories with covering families yield a category $\mathbf{CatFam}$.

If $\bC$ is a small category without basepoint, we can always add one as \emph{disjoint base point}. In fact, from $\bC$ we can form a new category $\bC_+$  by adding an object $\ast$ to $\bC$ with the property that $\Hom(\ast,\ast)=\{1_\ast\}$ and $\Hom(c,\ast)=\emptyset$ if $c\neq \ast$. This new category $\bC_+$ becomes a category with covering families if  all families of the form $\{\ast\to\ast\}_{i\in I}$
are covering families~\cite[Remark~2.3]{zbMATH07985809}.

\begin{example}\label{ex:catisos}
    Let $\bC$ be a small category with basepoint $\ast$. Then, the collection $\mathcal{S}_\bC^{\cong}$ of all isomorphisms of $\bC$ yields a category with covering families that we denote by $(\bC,\mathcal{S}_\bC^{\cong},\ast)$ or, more concisely by $\bC^{\cong}$.  In particular, we have:
    \begin{enumerate}[label=(\roman*)]
        \item for every finite (possibly empty) set $I$, the family $\{\ast\to\ast\}_{i\in I}$ is in $\mathcal{S}^{\cong}$;
        \item if $f\colon C\to D$ is an isomorphism in $\bC$, then $\{f\colon C\to D\}$ is in $\mathcal{S}^{\cong}$.
    \end{enumerate}
\end{example}

\begin{example}[{\cite[Definition~4.1]{lopez2025realizing}}]\label{ex:matroids}
    Let $\Mat$ be the category of matroids. Denote by $\Mat_+$ the category of matroids and matroids morphisms, with a disjoint distinguished point $\ast$. Then, the category with covering families  $(\Mat_+,\mathcal{S}_\Mat^{\cong},\ast)$ shall be denoted by $\Mat^{\cong}$.
\end{example}

We are interested in the $K$-theory of categories with covering families, and in particular in the $K$-theory of the category of matroids. Its definition is based on the following  notion of categories of covers:

\begin{defn}[{\cite[Definition~2.12]{zbMATH07985809}}]
    Let $\bC$ be a category with covering families. The \emph{category of covers} $W(\bC)$ has objects any finite collection $\{A_i\}_{i\in I}$ of non-distinguished objects of~$\bC$, and a morphism
    $
    \{A_i\}_{i\in I}\to \{B_j\}_{j\in J}
    $
    in $W(\bC)$  is a map $f\colon I\to J$ of finite sets along with morphisms $f_i\colon A_i\to B_{f(i)}$ for all $i\in I$, such that 
    \[
    \{f_i\colon A_i\to B_j\}_{i\in f^{-1}(j)}
    \]
    is a covering family for each $j\in J$. Composition of morphisms in $W(\bC)$ is defined by composing the maps of sets and the covering families. 
\end{defn}

We observe here that the category of covers construction yields a functor by \cite[Lemma~2.16]{zbMATH07985809}, and that $W(\bC)$ has a natural base-point object, corresponding to $I=\emptyset$. Using the category of covers associated to a category with covering families~$\bC$, it is possible to define the $K$-theory spectrum $K(\bC)$ -- see  \cite[Definition~2.17]{zbMATH07985809} for the details of this construction. The computation of the $K$-theory groups is in general quite difficult, but the case of the $0$-th group is well understood. In fact, we have the following:

\begin{proposition}[{\cite[Proposition~3.8]{zbMATH07985809}}]
    Let $\bC$ be a category with covering families. Then, $K_0(\bC)$ is the free abelian group
    \[
    K_0(\bC)=
    \bigslant{\Z[\ob(\bC)]}{[A]=\sum_{i\in I}[A_i]}
    \]
for every covering family $\{A_i\to A\}_{i\in I}$.
\end{proposition}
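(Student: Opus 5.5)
The plan is to compute $K_0(\bC)$ as $\pi_0$ of the spectrum $K(\bC)$. By the construction in \cite[Definition~2.17]{zbMATH07985809}, $K(\bC)$ is the $K$-theory of the category of covers $W(\bC)$, regarded as a symmetric monoidal category under concatenation of families, $\{A_i\}_{i\in I}\sqcup\{B_j\}_{j\in J}=\{A_i,B_j\}_{I\sqcup J}$. The unit is the empty family, which is the basepoint of $W(\bC)$. I would first check that this is indeed the monoidal structure (equivalently, the $\Gamma$-space structure) underlying the definition. Granting this, the group completion theorem, or simply the standard identification $\pi_0$ of the $K$-theory of a symmetric monoidal category, gives
\[
K_0(\bC)\cong \pi_0\lvert N W(\bC)\rvert^{\mathrm{gp}} ,
\]
the group completion of the commutative monoid $\pi_0\lvert N W(\bC)\rvert$.

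Next I would compute this monoid. Its elements are connected components of $W(\bC)$, that is, finite families of non-distinguished objects modulo the equivalence relation generated by the existence of a morphism in $W(\bC)$.

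\begin{itemize}
\item The monoid of objects of $W(\bC)$ up to reindexing is the free commutative monoid $\N[\ob(\bC)\setminus\{\ast\}]$. Indeed, any bijection of index sets together with identity maps is a morphism, since singleton identities are covering families.
\item A morphism $\{A_i\}_{i\in I}\to\{B_j\}_{j\in J}$ over $f\colon I\to J$ splits as a concatenation over $j\in J$ of morphisms $\{A_i\}_{i\in f^{-1}(j)}\to\{B_j\}$.
\item Each such piece is exactly a covering family of $B_j$. This includes empty covering families, which identify $\{B_j\}$ with the empty family.
\end{itemize}

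Hence the connected components are the quotient of $\N[\ob(\bC)\setminus\{\ast\}]$ by the monoid congruence generated by $\{A_i\}_{i\in I}\sim\{A\}$ for every covering family. Here one uses that the relation is compatible with concatenation, because the concatenation of two morphisms is again a morphism.

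It remains to group complete. The group completion of a commutative monoid presented by generators and relations is the abelian group with the same presentation, since both satisfy the same universal property with respect to maps into abelian groups. This gives $\Z[\ob(\bC)\setminus\{\ast\}]$ modulo $[A]=\sum_{i\in I}[A_i]$. Adding $\ast$ as a generator changes nothing, because the covering family $\{\ast\to\ast\}_{i\in\emptyset}$ forces $[\ast]=0$. This yields exactly the stated presentation.

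The main obstacle is the first step: matching the actual definition of $K(\bC)$ in \cite{zbMATH07985809}, which is phrased via a Segal/Waldhausen-type machine on $W(\bC)$, with the symmetric monoidal model. I would verify that the relevant $\Gamma$-category at level $1$ is $W(\bC)$ and that its level-$n$ categories are equivalent to $W(\bC)^n$, so that the specialness condition holds. After that, the $\pi_0$ computation above is formal.
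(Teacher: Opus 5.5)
The paper gives no proof here; it quotes the result from \cite[Proposition~3.8]{zbMATH07985809}. Your argument is the standard proof of that result and is correct: identify $K_0(\bC)$ with the group completion of $\pi_0\lvert NW(\bC)\rvert$, note that morphisms of $W(\bC)$ are concatenations of covering families, so the components form $\N[\ob(\bC)\setminus\{\ast\}]$ modulo the congruence generated by the covering relations, then group complete this presentation. One point should be made explicit: a covering family $\{B_j\to A\}_{j\in J}$ of some $A\neq\ast$ with some $B_j=\ast$ (possible when $\Hom(\ast,A)\neq\emptyset$) adds no new relation, because composing with the empty cover $\{\ast\to\ast\}_{\emptyset}$ on those components and with identities elsewhere shows that $\{B_j\to A\}_{B_j\neq\ast}$ is itself a covering family, hence a morphism of $W(\bC)$, and its relation coincides with the original one once $[\ast]=0$.
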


The $K$-theory of categories with covering families satisfies fundamental theorems valid also for the classical $K$-theory of categories.  We let $\bD\subseteq \bC$ be a full subcategory of the category with covering families~$\bC$; that is, $\bD$ is a full subcategory of $\bC$, and $\bD\to\bC$ is a morphism of categories with covering families. Following \cite{calle2024combinatorial}, we say that a cover $\{f_i\colon A_i\to A\}_{i\in I}$ is a \emph{refinement} of a cover $\{g_j\colon B_j\to A\}_{j\in J}$ if for every $i\in I$ there is $j\in J$ and a map $h\colon A_i\to B_j$ in $\bC$ 
such that $f_i=g_j\circ h$. 

\begin{defn}[{\cite[Definition~2.7]{calle2024combinatorial}}]
A category with covering families $\bC$ has refinements if any pair of covers $\{A_i\to A\}_{i\in I}$ and $\{A_j'\to A\}_{j\in J}$ of the same object $A$ has a common refinement. 
\end{defn}

We can now recall the D\'evissage Theorem for categories with covering families.

\begin{theorem}[{\cite[Theorem~2.8]{calle2024combinatorial}}]\label{thm:devissage}
   Let  $\bC$ be a category with covering families that has refinements and let $i\colon\bD\to \bC$ be the inclusion of a full subcategory with covering families. 
   If every object $C\in \bC$ has a choice of a cover $\{D_i\to C\}_{i\in I}$ with $D_i\in \bD$ for all $i\in I$, then
   \[
   Ki\colon K(\bD)\to K(\bC)
   \]
   is an equivalence. 
\end{theorem}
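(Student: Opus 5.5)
The strategy is to reduce the statement to a comparison of categories of covers, and then apply Quillen's Theorem A. The spectrum $K(\bC)$ of \cite[Definition~2.17]{zbMATH07985809} is built functorially out of $W(\bC)$, together with its multi-level variants used in the delooping. So it suffices to show that the functor $W(i)\colon W(\bD)\to W(\bC)$ induced by the inclusion, and each of its levelwise analogues, is a homotopy equivalence on nerves. A levelwise equivalence of the defining diagrams of categories then gives a levelwise equivalence of spaces, hence an equivalence of spectra $Ki$. Fullness of $\bD$ in $\bC$ ensures that $W(\bD)$ sits in $W(\bC)$ as the full subcategory on collections of objects of $\bD$. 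This holds because covering families of $\bD$ are exactly the covering families of $\bC$ whose members lie in $\bD$, which is how I read "full subcategory with covering families".

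By Theorem A, it is enough to show that for every object $\underline{C}=\{C_j\}_{j\in J}$ of $W(\bC)$ the comma category $W(i)\downarrow \underline{C}$ is contractible. Its objects are collections $\{D_k\}$ of objects of $\bD$ together with a map to $\underline{C}$ in $W(\bC)$, i.e. a $\bD$-cover of each $C_j$. I will establish contractibility in three steps.

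\textbf{Nonemptiness.} Using the chosen covers $\{D_i\to C_j\}$ from the hypothesis, take their disjoint union over $j\in J$; this is an object of the comma category.

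\textbf{Any two objects are dominated by a third.} Given two $\bD$-covers of each $C_j$, use refinements in $\bC$ to get a common refinement $\{A_l\to C_j\}$. Then cover each $A_l$ by objects of $\bD$ using the chosen covers, and compose using axiom (b) of a covering family structure. The resulting family consists of objects of $\bD$, covers $C_j$, and factors through both original covers. I must check that the factoring maps $D\to A_l\to B$ are morphisms in $W(\bC)$, i.e. that the relevant fibres are covering families. This is where I expect to need to refine once more, or to use that covers pulled back along the indexing maps remain covers.

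\textbf{Parallel morphisms can be equalized.} Given two maps $u,v\colon X\to Y$ in the comma category, I want $Z\to X$ with $uw=vw$, so that the comma category is cofiltered and hence contractible. This is the main obstacle, since refinement as defined only produces factorizations, not equalizers.

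My first attempt at the obstacle is to exploit that $u$ and $v$ lie over the same map to $\underline{C}$. The plan is to refine $X$ so finely that the two induced factorizations become indistinguishable after composing into $\underline{C}$, and to use fullness of $\bD$ to realize the needed comparison maps inside $\bD$. If this fails, I would replace the comma category by a cofinal sub-poset: "$\bD$-covers of $\underline{C}$ up to refinement", ordered by the existence of a factorization. This poset is directed by the previous step, hence contractible, and I would then show that the inclusion of this poset into the comma category is a homotopy equivalence, again via Theorem A.
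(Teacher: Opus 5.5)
The paper gives no proof of this statement: it is quoted from \cite[Theorem~2.8]{calle2024combinatorial}, so your proposal has to be judged on its own. Your reduction is the natural one. It suffices that $W(i)\colon W(\bD)\to W(\bC)$ be a homotopy equivalence. Your reading of ``full subcategory with covering families'' ($\bD$ inherits all covers of $\bC$ among $\bD$-objects) makes $W(i)$ fully faithful, and that reading is forced, since with fewer covers on $\bD$ the statement already fails on $K_0$. Also, $W(i)\downarrow\{C_j\}_{j\in J}$ is the product of the comma categories over the singletons $\{C_j\}$.

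However, both steps you flag are genuine gaps. In Step~2, ``has refinements'' as defined here only provides factorizations $f_i=g_j\circ h$. It does not say that, for each $j$, the maps $h$ landing in $B_j$ form a covering family of $B_j$; some $B_j$ may receive no $h$ at all. That fibrewise condition is exactly what a morphism of $W(\bC)$ requires. Axiom~(b) lets you compose covers but gives no base change, so there is no ``pulling covers back along the indexing maps'' to appeal to.

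Step~3 cannot be repaired by refining further. Under the hypotheses as recorded here, the comma categories need not be contractible, and even the conclusion can fail. Here is an example.
\begin{itemize}
\item Let $\bC$ have non-basepoint objects $A,D$, with $\Hom(D,D)=\{1,\sigma\}$, $\sigma^2=1$, $\Hom(D,A)=\{f\}$ (so $f\sigma=f$), $\Hom(A,A)=\{1\}$ and $\Hom(A,D)=\emptyset$.
\item Declare every singleton $\{g\}$ a covering family, together with the families $\{\ast\to\ast\}_I$.
\item Let $\bD$ be the full subcategory on $\ast$ and $D$, with the inherited covers.
\end{itemize}
All hypotheses hold: any two covers of $A$ are refined by $\{f\}$, any two covers of $D$ by $\{1_D\}$ (as $1_D=\sigma\circ\sigma$), and $A$ is covered by $\{f\}$.

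Yet $W(i)\downarrow\{A\}$ has the single object $(\{D\},f)$ with endomorphisms $1,\sigma$. It is $B(\Z/2)$, and no map equalizes $1$ and $\sigma$.

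Moreover, $K(\bD)\simeq\Sigma^\infty_+B(\Z/2)$ by Proposition~\ref{prop:isos}. In $BW(\bC)$, however, the loop $\sigma$ at $\{D\}$ is null-homotopic via the $2$-simplex $\{D\}\xrightarrow{\sigma}\{D\}\xrightarrow{f}\{A\}$. So the nonzero class of $\sigma$ in $K_1(\bD)$ dies in $K_1(\bC)$. Indeed, $W(\bC)$ is the free symmetric monoidal category on a category with terminal object $A$, so $K(\bC)$ is the sphere spectrum.

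Your fallback poset of ``covers up to refinement'' is a point in this example, so it cannot model the comma category either. A complete argument therefore needs hypotheses not visible in this transcription. One possibility is monic morphisms plus a disjointness condition on covers, as for Zakharevich's assemblers; this turns the comma categories into preorders, so directedness suffices. That would also need refinements witnessed by morphisms of $W(\bC)$. You should check the exact hypotheses of \cite{calle2024combinatorial} before trying to finish the proof.
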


Recall from Example~\ref{ex:catisos} that to any small category with basepoint $\bC$ we can associate the category with covering families  $\bC^{\cong}$ where the  covering families are given by the isomorphisms. Then, we have a complete description of the associated $K$-theory:

\begin{proposition}[{\cite[Proposition~2.14]{calle2024combinatorial}}]\label{prop:isos}
    Let $\bC^{\cong}$ be a small category with basepoint. Then, there is an equivalence of spectra
    \[
    K(\bC^{\cong})\simeq \bigvee_{[C]}\Sigma_+^{\infty} B(\mathrm{iso}_{\bC}(C))
    \]
    where the wedge ranges across the isomorphism classes of objects in $\bC$ and $\mathrm{iso}_{\bC}(C)$ is the
group of $\bC$-isomorphisms of an object $C  \in \bC$.
\end{proposition}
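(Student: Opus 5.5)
The plan is to compute the category of covers $W(\bC^{\cong})$ explicitly, recognize it as a free symmetric monoidal groupoid, and then apply a Barratt--Priddy--Quillen type argument to its $K$-theory spectrum.

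First I will determine $W(\bC^{\cong})$. Its objects are finite families $\{A_i\}_{i\in I}$ of non-distinguished objects. A morphism $\{A_i\}_{i\in I}\to\{B_j\}_{j\in J}$ is a map $f\colon I\to J$ together with maps $f_i\colon A_i\to B_{f(i)}$ such that each fibre family $\{f_i\}_{i\in f^{-1}(j)}$ is a covering family. Since $B_j\neq\ast$, the only covering families with target $B_j$ in $\mathcal{S}^{\cong}_{\bC}$ are singletons $\{C\xrightarrow{\cong}B_j\}$. In particular, the empty family over a non-basepoint object is not a cover, because $\ast$ admits no maps into $B_j$ (and families of the form $\{\ast\to\ast\}$ only cover $\ast$).

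Hence every fibre of $f$ is a singleton, so $f$ is a bijection and every $f_i$ is an isomorphism. This identifies $W(\bC^{\cong})$, as a symmetric monoidal category under disjoint union of index sets, with the free symmetric monoidal category on the groupoid $\mathcal{G}=\mathrm{iso}(\bC\setminus\{\ast\})$:
\[
W(\bC^{\cong})\simeq \coprod_{n\ge 0} \mathcal{G}^{n}\wr S_n \ .
\]
Since $\mathcal{G}$ is a groupoid, it is equivalent to $\coprod_{[C]}\mathrm{iso}_{\bC}(C)$, the coproduct over isomorphism classes of one-object groupoids.

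Second, I will unwind the definition of $K(\bC)$ from \cite[Definition~2.17]{zbMATH07985809}. It is built from $W(\bC)$ using its symmetric monoidal structure under disjoint union, via a Segal $\Gamma$-space. So $K(\bC^{\cong})$ is the $K$-theory spectrum of the free symmetric monoidal groupoid on $\mathcal{G}$.

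The generalized Barratt--Priddy--Quillen theorem then applies. The nerve of the free symmetric monoidal category on $\mathcal{G}$ is the free $E_\infty$-space $\coprod_n E S_n\times_{S_n} B\mathcal{G}^n$, and its group completion is $\Omega^\infty\Sigma^\infty_+ B\mathcal{G}$. Therefore
\[
K(\bC^{\cong})\simeq \Sigma^\infty_+ B\mathcal{G}\simeq \Sigma^\infty_+\Bigl(\coprod_{[C]} B\,\mathrm{iso}_{\bC}(C)\Bigr)\simeq \bigvee_{[C]}\Sigma^\infty_+ B\,\mathrm{iso}_{\bC}(C) \ ,
\]
where the last step uses that $\Sigma^\infty_+$ converts disjoint unions into wedges.

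The main obstacle is the second step: matching the specific construction of \cite[Definition~2.17]{zbMATH07985809} with the standard symmetric monoidal (Segal) $K$-theory of $W(\bC)$, so that Barratt--Priddy--Quillen applies on the nose. I would check this at the level of the $\Gamma$-category: show that its value on $n_+$ is equivalent to $W(\bC)^n$, so that the construction is special, and then identify it with the $\Gamma$-space of the free permutative category on $\mathcal{G}$. The rest is bookkeeping.
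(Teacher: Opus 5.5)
Your proof is correct. The paper does not prove this statement itself; it imports it from \cite{calle2024combinatorial}. Your route is the standard argument behind that citation, and behind \cite[Example~2.20]{zbMATH07985809}, which the paper invokes later: every fibre of the index map of a morphism in $W(\bC^{\cong})$ is a singleton carrying an isomorphism, so $W(\bC^{\cong})$ is the free symmetric monoidal groupoid on the isomorphism groupoid $\mathcal{G}$ of non-basepoint objects, and then Barratt--Priddy--Quillen applies. The step you flag as the main obstacle, identifying $K(\bC)$ with the symmetric monoidal $K$-theory of $(W(\bC),\sqcup)$, is routine.

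Three minor points:
\begin{enumerate}
\item The wreath product should read $\mathcal{G}\wr S_n$, i.e.\ $\mathcal{G}^n/\!/S_n$.
\item The empty family fails to cover $B_j\neq\ast$ simply because the only empty family in $\mathcal{S}^{\cong}_{\bC}$ has target $\ast$. Maps out of $\ast$ play no role.
\item Since your $\mathcal{G}$ excludes $\ast$, your wedge runs over isomorphism classes of non-basepoint objects. This is the correct reading of the statement: including $[\ast]$ would add a spurious summand $\Sigma^\infty S^0$. For example, for $\bC=\{\ast\}$ the category $W(\bC)$ is trivial and $K(\bC)$ is contractible.
\end{enumerate}
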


As a consequence, Proposition~\ref{prop:isos} gives us the full computation of the category of matroids $\Mat^{\cong}$ with coverings given by the isomorphisms. In the next sections we shall focus on a different structure of covering families, the Tutte coverings developed in \cite{lopez2025realizing}.

\section{Matroids with Tutte coverings}

In this section we first recall from \cite{lopez2025realizing} the definition of the covering category of matroids with Tutte coverings, and then we shall focus on indecomposable matroids. However, to define the notion of Tutte coverings we first need to recall the definition of deletion-contraction trees.

Let $T$ be a binary tree with root $r_T$. We can naturally associate a category  to $T$ as follows. First, we consider the order relation $\leq_T$ on the vertices of $T$ with $w\leq_T v$ if and only if $w=v$ or $w$ is a descendant of $v$. As $T$ is a tree, this describes a poset, hence   a category. We denote this resulting category by $\cC_T$.   

Recall from Example~\ref{ex:matroids} that $\Mat_+$ denotes the category of matroids and matroids morphisms, with a disjoint distinguished point $\ast$.

\begin{defn}[{\cite[Definition~4.4 \& 4.5]{lopez2025realizing}}]\label{defn:DCtree}
    Let $T$ be a rooted binary tree with root~$r_T$. An \emph{elementary deletion-contraction tree of shape $T$} is a functor $\cF\colon \cC_T\to \Mat_+$ such that:
    \begin{enumerate}
        \item $\cF(r_T)\neq \ast$;
        \item\label{def:DCtreeitem2} if $v$ is an internal vertex of $T$ with only one child $w$, then $\cF(v)=\cF(w)$ and if $w\to v$ is the morphism in $\cC_T$ induced by the relation $w\leq_T v$, we have $\cF(w\to v)=\Id_{\cF(v)}$;
        \item if $v$ is an internal vertex with children $w_1$ and $w_2$, then there is a non-degenerate element~$e$ of the matroid $M\coloneqq \cF(v)$ for which $\cF$ maps the subdiagram
         \begin{center}
 \begin{tikzcd}
 & v & \\
 w_1\arrow[ur, ""]  & &w_2 \arrow[ul, ""] 
 \end{tikzcd}    
 \end{center}
 in $\cC_T$ to one of the following ones:
          \begin{center}
 \begin{tikzcd}
 & M & \\
 M/e\arrow[ur, ""]  & &M\setminus e \arrow[ul, ""] 
 \end{tikzcd}    
 or
  \begin{tikzcd}
 & M & \\
 M\setminus e\arrow[ur, ""]  & &M/ e \arrow[ul, ""] 
 \end{tikzcd}    
 \end{center}
 where $M/e$ and $M\setminus e$ denote the contraction and deletion of $M$ on $e$, and the maps $M/e\to M$ and $M\setminus e\to M$ denote the standard inclusions. 
    \end{enumerate}
    We say that  a functor $\mathcal{G}\colon \cC_T\to \Mat_+$ is a \emph{deletion-contraction tree} of shape $T$ if it is naturally isomorphic to  an elementary deletion-contraction tree of shape~$T$. 
\end{defn}

Following \cite[Definition~4.11]{lopez2025realizing}, we can now define the covering family structure on the category of matroids induced by the Tutte morphism.

\begin{defn}\label{def:tc}
Let $\cS^{\tc}$ be the collection of multi-morphisms on $\Mat_+$ defined as follows:
\begin{enumerate}
    \item For every finite (possibly empty) set $I$, the family $\{\ast\to\ast\}_{i\in I}$ is in $\cS^\tc$.
    \item \label{def:tcitem2} Given a matroid $M\neq \ast$, a multi-morphism $\{f_j\colon M_j\to M\}_{j=1,\dots,p}$ is in $\cS^\tc$ if and only if there exists a rooted binary tree $T$ with root $r_T$, leaves $v_1,\dots,v_p$ and
a deletion-contraction tree $\cF\colon \cC_T\to\Mat_+$, with $\cF(r_T)=M$, such that $f_j=\cF(i_{v_j})$ for each $j=1,\dots, p$, and where $i_{v_j}$ is the unique morphism $i_{v_j}\colon v_j\to r_T$ in $\cC_T$. 
\end{enumerate}
A multi-morphism $\{f_j\colon M_j\to N\}_{j=1,\dots,p}$ in  $\cS^{\tc}$  is called a \emph{Tutte covering} of $M$. 
\end{defn}

By \cite[Proposition~4.12]{lopez2025realizing}, the triple $(\Mat_+,\mathcal{S}^{\tc},\ast)$ is a category with covering families that we shall shortly denote by $\Mat^\tc$.

\subsection{Indecomposable Coverings}
In this subsection we recall  the properties of the so-called \emph{indecomposable matroids}. Recall from \cite[Definition~2.9]{lopez2025realizing} that, if $\bC$ is a category with covering families, then an object $C\in \bC$ is \emph{indecomposable} if the only coverings of $C$ are singletons of the form $\{A\xrightarrow{\cong} C\}$.  

\begin{defn}\label{def:idencmatroids} An \emph{indecomposable matroid} is a matroid $M=(X,\mc{I})$ such that every $x \in X$ is degenerate. 
\end{defn}

Observe that an indecomposable matroid is the graphic matroid associated to a forest,  possibly with loops. In fact, $M$ is an indecomposable matroid if it is the direct sum (possibly empty) of finitely many coloops
and loops. As by \cite[Definition 4.15]{lopez2025realizing}, we say that a Tutte covering $\{A_i \rightarrow M\}_{i\in I}$ for a matroid $M\neq\ast$  is \emph{indecomposable} if all the matroids $A_i$, for $i\in I$, are indecomposable.
 Every matroid admits a covering by indecomposable. In fact, we have the following:

\begin{proposition}[{\cite[Proposition 4.14]{lopez2025realizing}}]\label{prop:exindec}
Any matroid $M \neq *$ admits a Tutte covering $\{g_i\colon N_i \rightarrow M\}_{i\in I}$ where each $N_i$ is an indecomposable matroid.
\end{proposition}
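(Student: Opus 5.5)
We argue by strong induction on the number $n(M)$ of non-degenerate elements of $M$. The idea is to build an elementary deletion-contraction tree by repeatedly splitting at a non-degenerate element until every leaf is indecomposable. The key point is that the splitting strictly decreases the number of non-degenerate elements, so the process terminates.

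\emph{Base case.} If $n(M)=0$, then every element of $M$ is degenerate, so $M$ is indecomposable by Definition~\ref{def:idencmatroids}. We take the tree $T$ with a single vertex $r_T$ and the functor $\cF(r_T)=M$. This is an elementary deletion-contraction tree, and its only leaf yields the singleton $\{\Id_M\colon M\to M\}$. That singleton is a Tutte covering by indecomposables.

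\emph{Inductive step.} Suppose $n(M)>0$ and choose a non-degenerate $e\in X$. The main lemma needed is that degenerate elements stay degenerate in minors, namely in $M\setminus e$ and $M/e$.
\begin{itemize}
\item If $x\neq e$ is a loop of $M$, then $\{x\}$ is dependent in $M\setminus e$. It is also dependent in $M/e$, since $e$ is not a loop, so Remark~\ref{rmk:usualcontraction} applies and $\{x,e\}\notin\mc{I}$.
\item If $x$ is a coloop of $M$, then $x$ is a loop of $M^*$. Since $M/e=(M^*\setminus e)^*$, and $M\setminus e=(M^*/e)^*$ by Proposition~\ref{prop:minorsareWD} and duality, the loop case applied to $M^*$ shows that $x$ remains a coloop in both minors. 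Here we use that $e$ is non-degenerate in $M^*$ as well.
\end{itemize}
Since $e$ itself is removed, both $M\setminus e$ and $M/e$ have strictly fewer non-degenerate elements than $M$.

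\emph{Gluing.} By induction we get deletion-contraction trees $\cF_1$ of shape $T_1$ rooted at $M/e$ and $\cF_2$ of shape $T_2$ rooted at $M\setminus e$, each with indecomposable leaves. Let $T$ be the binary tree with a new root $r_T$ whose two children are the roots of $T_1$ and $T_2$. Define $\cF\colon\cC_T\to\Mat_+$ by $\cF(r_T)=M$, by letting $\cF$ agree with $\cF_1$ and $\cF_2$ on the two subtrees, and by sending the root edges to the standard inclusions of Remark~\ref{rmk:DCmorphisms}. Arrows from lower vertices to $r_T$ are defined as composites, so functoriality is automatic because $\cC_T$ is a poset. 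Conditions (1)--(3) of Definition~\ref{defn:DCtree} hold at the new root by the choice of $e$, and they hold elsewhere by the inductive hypothesis.

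The leaves of $T$ are exactly the leaves of $T_1$ and $T_2$, so all their images are indecomposable. By Definition~\ref{def:tc}, the resulting family of composites is therefore a Tutte covering of $M$ by indecomposable matroids.

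The one delicate point is the induction hypothesis itself. If it only provides a tree \emph{naturally isomorphic} to an elementary one, we would have to transport the isomorphism when gluing. To avoid this, we run the induction with elementary trees directly, which is possible because our construction only ever produces elementary trees. The main obstacle is therefore the lemma on preservation of loops and coloops, and in particular the duality bookkeeping needed for the coloop case under contraction.
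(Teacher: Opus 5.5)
Your proof is correct and follows essentially the argument the paper relies on. The statement is quoted from \cite[Proposition 4.14]{lopez2025realizing}, and the same ``split at a non-degenerate element and iterate'' construction appears in the proof of Proposition~\ref{prop:indecrefinement}; applied to the trivial cover $\{\Id_M\colon M\to M\}$, it gives exactly this statement, and gluing the subtrees under a new root rather than growing the tree at its leaves changes nothing. The one real difference is your termination measure: if you induct on the size of the ground set, which drops by one at every split, the lemma on persistence of loops and coloops becomes unnecessary, so the ``main obstacle'' you identify is self-imposed (your proof of that lemma is nonetheless correct).
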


By a multi-set of matroids we shall  mean an indexed family of matroids $\{M_i\}_{i\in I}$ with $I\neq \emptyset$. An isomorphism of multi-sets of matroids $\{M_i\}_{i\in I}\to \{N_j\}_{j\in J}$ is a bijection $f\colon I\to J$ such that $M_i\xrightarrow{\cong}N_{f(i)}$ is an isomorphism of matroids for all $i\in I$.
Given an indecomposable Tutte covering $A=\{A_i \rightarrow M\}_{i\in I}$, the set $\Ind_M(A)$ will denote the multi-set of domains -- that is, indecomposable matroids -- appearing in $A$. Then, it is proven in \cite[Proposition 4.16]{lopez2025realizing} that indecomposable Tutte coverings have isomorphic domains. More precisely, we have:

\begin{proposition}[{\cite[Proposition 4.16]{lopez2025realizing}}]\label{prop:indareiso}
Let $A\coloneqq\{A_i \rightarrow M\}_{i\in I}$ and $B\coloneqq\{B_j \rightarrow M\}_{j\in J}$ be indecomposable Tutte coverings of a matroid $M\neq\ast$. Then $\Ind_M(A)$ and $\Ind_M(B)$ are isomorphic as multi-sets.
\end{proposition}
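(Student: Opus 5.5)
The plan is to use the Tutte polynomial of Definition~\ref{def:Tutte} as a complete invariant for this situation. An indecomposable matroid is determined up to isomorphism by its number of coloops and its number of loops. The sum of the Tutte polynomials of the leaves of a deletion--contraction tree equals the Tutte polynomial of the root. So the multiset of leaves of an indecomposable Tutte covering can be read off from the coefficients of $T_M(x,y)$, and this multiset does not depend on the covering.

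\emph{Step 1: indecomposable matroids.} By the remark following Definition~\ref{def:idencmatroids}, an indecomposable matroid $N$ is a direct sum of $a$ coloops and $b$ loops. It is therefore determined up to isomorphism by the pair $(a,b)$, since any bijection of ground sets sending loops to loops and coloops to coloops is an isomorphism. A direct computation from Definition~\ref{def:Tutte} gives $T_{U_{1,1}}=x$ and $T_{U_{0,1}}=y$. By the multiplicativity \eqref{prop:DSTutte}, we get $T_N(x,y)=x^a y^b$, and in particular $T_\emptyset = 1$. Hence $N\cong N'$ if and only if $T_N=T_{N'}$.

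\emph{Step 2: additivity along trees.} I will use the standard deletion--contraction identity $T_M=T_{M\setminus e}+T_{M/e}$ for a non-degenerate element $e$. It follows from Definition~\ref{def:Tutte} by splitting the sum according to whether $e\in A$. If $e\notin A$, then $A$ is a subset of $M\setminus e$ with the same rank, nullity and corank, because $e$ is not a coloop and so $r(M\setminus e)=r(M)$. If $e\in A$, then $A\setminus e$ is a subset of $M/e$ with $r_{M/e}(A\setminus e)=r(A)-1$ and $r(M/e)=r(M)-1$, because $e$ is not a loop. With this identity, induction on the number of vertices of the tree $T$ shows that for an elementary deletion--contraction tree $\cF$ with root $M$ and leaves $v_1,\dots,v_p$ one has
\[
T_M=\sum_{j=1}^p T_{\cF(v_j)} .
\]
Vertices with a single child contribute nothing new, by item~\eqref{def:DCtreeitem2} of Definition~\ref{defn:DCtree}. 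The same identity holds for a general deletion--contraction tree, since a natural isomorphism replaces each leaf by an isomorphic matroid and the Tutte polynomial is an isomorphism invariant.

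\emph{Step 3: conclusion.} Write $n_{a,b}(A)$ for the number of $i\in I$ with $A_i$ having $a$ coloops and $b$ loops. By Steps~1 and~2,
\[
T_M(x,y)=\sum_{a,b} n_{a,b}(A)\,x^ay^b=\sum_{a,b} n_{a,b}(B)\,x^ay^b ,
\]
so $n_{a,b}(A)=n_{a,b}(B)$ for all $a,b$. Choosing, for each $(a,b)$, a bijection between the corresponding index subsets of $I$ and $J$ gives a bijection $f\colon I\to J$ with $A_i\cong B_{f(i)}$ by Step~1. This is exactly an isomorphism of multi-sets.

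The main point requiring care is Step~2: verifying the rank bookkeeping for the deletion--contraction identity, which is where non-degeneracy of $e$ is used. If the identity may be quoted from \cite[Chapter~6]{WhiteMatApp}, this step becomes routine, and the rest is formal.
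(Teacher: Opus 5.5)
The paper contains no proof of this statement; it is quoted from \cite[Proposition~4.16]{lopez2025realizing}, so there is no in-paper argument to compare yours with. Your proof is correct and self-contained.

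\begin{itemize}
\item Step~1 is right. It uses the standard meaning of coloop, an element lying in every \emph{basis}, which is what Definition~\ref{def:terminologye} intends despite its wording.
\item The rank bookkeeping in Step~2 is exactly what is needed. You have $r(M\setminus e)=r(M)$ because $e$ is not a coloop, and $r_{M/e}(S)=r_M(S\cup e)-1$ because $e$ is not a loop. So $A\mapsto A\setminus e$ matches the subsets containing $e$ with the subsets of the ground set of $M/e$, preserving corank and nullity.
\item The induction works because restricting an elementary deletion--contraction tree to the subtree below a child of the root gives again an elementary tree.
\item Passing from elementary to general trees only changes the leaves, and the root, up to isomorphism, which the Tutte polynomial does not detect.
\item Step~3 uses only the linear independence of the monomials $x^ay^b$.
\end{itemize}

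A natural alternative would be a purely combinatorial induction on the size of the ground set. It would compare two trees whose first splits use different non-degenerate elements $e\neq f$, and commute the minors via Proposition~\ref{prop:minorsareWD}. That route has awkward cases where $f$ becomes a coloop of $M\setminus e$ or a loop of $M/e$. Handling them needs an extra lemma on how loops and coloops pass through indecomposable coverings.

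Your argument avoids this entirely. All the confluence work is delegated to the well-definedness of the corank--nullity expansion of $T_M$. The cost is reliance on an external invariant. The gain is an explicit description of the multiset: the coefficient of $x^ay^b$ in $T_M$ counts the domains with $a$ coloops and $b$ loops. This is the same bookkeeping that underlies realizing the universal Tutte--Grothendieck invariant on $K_0(\Mat^{\tc})$.
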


As a main step towards the computation of the $K$-theory of matroids with Tutte coverings, we want to show that the category with covering families $\Mat^\tc$ has refinements. 
We will use the following proposition;

\begin{proposition}\label{prop:indecrefinement} Let $A\coloneqq\{A_i \rightarrow M\}_{i\in I}$ be a Tutte covering. Then there exist a refinement $A'\coloneqq\{A'_k \rightarrow M\}_{k\in K}$ of $A$ that is an indecomposable Tutte covering.
\end{proposition}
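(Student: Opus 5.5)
The plan is to refine $A$ leaf by leaf, using Proposition~\ref{prop:exindec} on each domain and then gluing trees via the composition axiom of covering families.

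Let $A=\{f_i\colon A_i\to M\}_{i\in I}$ be a Tutte covering, realized by a deletion-contraction tree $\cF\colon \cC_T\to\Mat_+$ with $\cF(r_T)=M$ and leaves $v_i$, so that $f_i=\cF(i_{v_i})$. For each $i\in I$, Proposition~\ref{prop:exindec} gives an indecomposable Tutte covering $\{g_{ik}\colon N_{ik}\to A_i\}_{k\in K_i}$ of $A_i$. Here $A_i\neq\ast$: the minors of $M$ are matroids, possibly with empty ground set, and the empty matroid is itself indecomposable, so it is covered by its identity. Each such covering is realized by a tree $\cF_i\colon\cC_{T_i}\to\Mat_+$ with root value $A_i$.

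Next, form the composite family $A'=\{f_i\circ g_{ik}\colon N_{ik}\to M\}_{i\in I,\,k\in K_i}$. It is a Tutte covering by axiom (b) of a covering family structure, since $(\Mat_+,\cS^{\tc},\ast)$ is a category with covering families by \cite[Proposition~4.12]{lopez2025realizing}. To be more concrete, one can also see this directly by grafting: build the tree $T'$ obtained from $T$ by attaching $T_i$ at the leaf $v_i$, and define $\cF'$ by gluing $\cF$ and the $\cF_i$ along $\cF(v_i)=A_i=\cF_i(r_{T_i})$. When the trees are only naturally isomorphic to elementary ones, we first transport $\cF_i$ along the isomorphism at the root so that the values match. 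By construction every domain $N_{ik}$ is indecomposable, so $A'$ is an indecomposable Tutte covering.

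Finally, $A'$ refines $A$: for each leaf $(i,k)$ we take $j=i$ and $h=g_{ik}\colon N_{ik}\to A_i$, so that $f_i\circ h$ is exactly the map of $A'$ indexed by $(i,k)$. The only delicate point is the gluing or the appeal to the composition axiom, namely checking that the grafted functor is still (naturally isomorphic to) an elementary deletion-contraction tree when the composed pieces are only deletion-contraction trees up to isomorphism. Invoking the established composition axiom from \cite{lopez2025realizing} sidesteps this, and it is what I would use first.
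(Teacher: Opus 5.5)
Your argument is correct, but it is packaged differently from the paper's. The paper works by hand. If some domain $A_j$ has a non-degenerate element $e$, it grafts two children carrying $A_j\setminus e$ and $A_j/e$ onto the corresponding leaf of the deletion--contraction tree. It then notes that the new family refines $A$, and iterates until no non-degenerate elements remain. You instead take, for every $i$ at once, the indecomposable covering of $A_i$ given by Proposition~\ref{prop:exindec}. You compose these with $A$ using axiom (b) of a covering family structure, which holds because $\Mat^{\tc}$ is a category with covering families \cite[Proposition~4.12]{lopez2025realizing}. Refinement is then immediate with $h=g_{ik}$.

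The paper's route is self-contained and needs only the trivial one-step grafting; run on $\{M=M\}$, it reproves Proposition~\ref{prop:exindec}. However, it leaves several points implicit: that the iteration terminates (each split shrinks the ground set of the leaf it acts on), that refinement is transitive, and how to handle a tree that is only naturally isomorphic to an elementary one. Your route absorbs all of this into the two cited results, at the cost of depending on them. Your check that each $A_i\neq\ast$ before invoking Proposition~\ref{prop:exindec} is exactly the hypothesis that needs verifying.
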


\proof  If every domain~$A_i$ of $A$ is an indecomposable matroid, the assertion is immediate. Otherwise, there exists an index $j$ and a matroid $A_j$ such that $A_j$ has at least a non-degenerate element $e$.
By Remark~\ref{rmk:DCmorphisms}, we have the morphisms 
\[f_j^d\coloneqq  A_j \setminus e \rightarrow A_j \rightarrow M, \qquad f_j^c\coloneqq A_j / e \rightarrow A_j \rightarrow M\ , \]
and, consequently, we can consider the multi-morphism 
\[ A'\coloneqq\{A_i \rightarrow M\}_{i \neq j} \cup \{ A_j \setminus e \xrightarrow{f_j^d} M\} \cup \{ A_j / e \xrightarrow{f_j^d} M\}\ .\]
We want to show that $A'$ is a Tutte covering family. To see it, let $\cF_A\colon T_A \rightarrow \Mat_+$ be a deletion-contraction tree for the covering family $A$ and let $\ell_j$ be a leaf of $T_A$ such that $\cF_A(\ell_j)=A_j$. Consider the tree $T_{A'}$ obtained from $T_A$ by adding  the children $w_1$ and $w_2$ to $\ell_j$. 
The assignment 
\[ \cF_{A'}(v)\coloneqq\begin{cases} \cF_A(v) &\mbox{ if } v \in T_A \\ M \setminus e &\mbox{ if } v=w_1 \\ M /e &\mbox{ if } v=w_2 \end{cases}  \]
provides the desired functor $\cF_{A'}\colon T_{A'} \rightarrow \Mat_+$. 
By construction,  $A'$ refines $A$. We can now iterate this process until every matroid in the domain of the newly constructed Tutte covering has no non-degenerate elements. This proves the statement.
\endproof 
\begin{corollary}\label{cor:refinements}
 The category with covering families $\Mat^\tc$ has refinements.
\end{corollary}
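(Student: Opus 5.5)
To show that $\Mat^\tc$ has refinements, I take two Tutte coverings $A=\{A_i\to M\}_{i\in I}$ and $B=\{B_j\to M\}_{j\in J}$ of the same object and produce a common refinement. The basepoint is handled separately. If $M=\ast$, every covering is a family $\{\ast\to\ast\}$; the empty family is a covering family, and it refines any family vacuously. If one prefers a nonempty common refinement, the family $\{\ast\to\ast\}$ indexed by a single element refines both, since every member factors through any member of the other. So assume $M\neq\ast$.

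First I apply Proposition~\ref{prop:indecrefinement} to $A$ and to $B$ separately. This gives indecomposable Tutte coverings $A'=\{A'_k\to M\}$ refining $A$ and $B'=\{B'_l\to M\}$ refining $B$. Refinement is transitive, because factorizations compose. So it suffices to show that $A'$ refines $B$; then $A'$ is a common refinement of $A$ and $B$.

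To show that $A'$ refines $B$, I would not compare $A'$ with $B'$ by Proposition~\ref{prop:indareiso}. That result only identifies the domains up to an abstract isomorphism of multisets, and says nothing about compatibility of the maps into $M$. Instead I build the refinement directly. The deletion-contraction tree of $B$ can be extended at its leaves to an indecomposable tree, as in the proof of Proposition~\ref{prop:indecrefinement}. I would run this extension starting from $B$'s tree in a way that follows $A$'s elements. Concretely, I grow the tree of $A$ further until each leaf is indecomposable, and argue that each resulting leaf $A'_k\to M$ is the inclusion of a minor $M\setminus T_1/T_2$. This inclusion is an injection of ground sets; up to the natural isomorphisms allowed in Definition~\ref{defn:DCtree}, it is the standard subset inclusion.

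The key claim is then: given a leaf $N=M\setminus T_1/T_2$ with $N$ indecomposable, and any Tutte covering $B$ of $M$, some leaf $B_j=M\setminus S_1/S_2$ has $S_1\subseteq T_1$ and $S_2\subseteq T_2$. I would prove this by walking down $B$'s tree from the root. At a vertex $M\setminus S_1/S_2$ that splits on an element $e$, we have $e\notin S_1\cup S_2$, so $e$ lies in $T_1$, in $T_2$, or in the ground set of $N$. If $e\in T_1$ I go to the deletion child, and if $e\in T_2$ to the contraction child. Then the inclusion $N\to M$ factors through $B_j\to M$ via the subset inclusion $N\to B_j$. This is a matroid morphism by Proposition~\ref{prop:minorsareWD} and Remark~\ref{rmk:DCmorphisms}, since $N$ is a minor of $B_j$.

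The main obstacle is the third case, $e\in E(N)$. Here I must show this cannot occur, or can be sidestepped. The element $e$ is non-degenerate in $M\setminus S_1/S_2$, but in $N$, which is a further minor, it is a loop or a coloop. My approach is to use the minor identity to rewrite $N=(M\setminus S_1/S_2)\setminus (T_1\setminus S_1)/(T_2\setminus S_2)$. I would then exchange the roles of $e$: for instance, if $e$ is a coloop in $N$, then $N\setminus e\cong N/e$ naturally. I would also try to show that $N$ maps injectively into one of the two children by a matroid morphism that is not necessarily the subset inclusion. Any matroid morphism $N\to B_j$ commuting with the maps to $M$ suffices, and this flexibility should resolve the loop and coloop cases. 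If this fails, I would instead enlarge the refinement by also splitting $N$-leaves along $B$'s elements.
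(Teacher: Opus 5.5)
Your preliminary steps are sound: the basepoint case, transitivity of refinement, passing to indecomposable refinements via Proposition~\ref{prop:indecrefinement}, and the first two cases of your walk down $B$'s tree. Your objection to using Proposition~\ref{prop:indareiso} is also correct. A bijection of multisets with abstract isomorphisms $A'_k\cong B'_l$ gives no factorization $f_k=g_l\circ h$ over $M$.

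The case you leave open, $e\in E(N)$, is a genuine gap, and neither of your remedies can close it. Take $M=U_{1,2}$ on $\{a,b\}$, and let
$A=\{M/a\to M,\ M\setminus a\to M\}$ and $B=\{M/b\to M,\ M\setminus b\to M\}$.
Let $N=M\setminus a$, which is a coloop on $\{b\}$. The only split in $B$'s tree is at $e=b\in E(N)$.
\begin{itemize}
\item Allowing non-inclusion morphisms does not help. Any $h\colon N\to B_j$ with $f=g_j\circ h$ forces $\mathrm{im}(f)\subseteq\mathrm{im}(g_j)$, i.e.\ $\{b\}\subseteq\{a\}$, however $h$ is chosen.
\item $N$ cannot be split further. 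Tutte trees only split at non-degenerate elements, and $b$ is a coloop of $N$.
\end{itemize}

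The obstruction is not an artefact of your method. Every one-element minor of $U_{1,2}$ is indecomposable. A natural isomorphism of trees only transports images along a bijection of $E(M)$. Hence every Tutte covering of $U_{1,2}$ is either a single isomorphism onto $M$, or a pair of maps with a common one-element image $\{a\}$ or $\{b\}$. A common refinement of $A$ and $B$ would need each of its members (there is at least one) to have image contained in $\{a\}\cap\{b\}=\emptyset$, which is impossible.

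So, with refinement defined by factorizations $f_i=g_j\circ h$, the statement fails. The paper's own proof breaks at exactly the point you flagged. It concludes that ``$h\circ h_B$ is a refinement morphism'' from Proposition~\ref{prop:indareiso}, which only matches domains up to isomorphism and not over $M$. Proving the corollary, and the application of Theorem~\ref{thm:devissage} that rests on it, would require changing either the covering families or the notion of refinement, not just a more careful argument.
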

\proof 
Let $\{A_i \rightarrow M\}_{i\in I}$ and $\{B_j \rightarrow M\}_{j\in J}$ be covering families in $\Mat^\tc$. We want to prove that there exists a common refinement for $A$ and $B$. By Proposition~\ref{prop:indecrefinement}, there exist  indecomposable Tutte coverings $A'$ and $B'$ refining $A$ and $B$, respectively, via  refining morphisms $h_A$ and~$h_B$.
Now, by Proposition~\ref{prop:indareiso}, we have that the multi-sets of matroids $\Ind_M(A')$ and $\Ind_M(B')$ are isomorphic. This means that there exists a bijective map $h\colon \Ind_M(A') \rightarrow \Ind_M(B')$ such that if $h(A'_j)=B'_i$, then $h_j\colon A'_j \rightarrow B'_i$ is an isomorphism of matroids. The composition $h \circ h_B$ is then a refinement morphism and $A'$ refines both $A$ and $B$. This is enough to show that the category $\Mat^\tc$ has refinements.  
\endproof

\section{The $K$-theory of (indecomposable) matroids}
In this section we introduce the  subcategories of  $\Mat^\tc$ of graphic matroids and indecomposable matroids. Our aim is to prove that these categories share the same $K$-theoretic invariants. As a main application, we provide a computation of the $K$-theory of $\Mat^{\tc}$. 

We start with the category with covering families of graphic matroids. Denote by $\Graph$ the full subcategory of $\Mat$ on graphic matroids. Therefore, its objects are matroids of the form $M_G$, for $G$ a finite graph, and the morphisms are the induced morphisms as (graphic) matroids. 
By Proposition~\ref{prop:closuregraphs}, graphic matroids are closed under taking deletions and contractions. Therefore, the  covering family structure on $\Mat_+$ with multi-morphisms the Tutte coverings also yields a category with covering families, that we denote by \[
\Graph^{\tc}\coloneqq(\Graph_+,\mathcal{S}_{\Graph}^{\tc},\ast) \ .
\]
\begin{rmk}
    The inclusion  
    $\Graph^{\tc}\to\Mat^{\tc}$ is a morphism of categories with covering families.
\end{rmk}

We now focus of indecomposable matroids and recall that  an indecomposable matroid is the graphic matroid associated to a forest,  possibly with loops (see Definition~\ref{def:idencmatroids}). We denote by $\LF$ the full subcategory of $\Mat$ on  indecomposable matroids. The morphisms are all matroid morphisms between indecomposable matroids. 

\begin{rmk}\label{rem:IMisminorclosed}
The category $\LF$ is closed under taking minors and duals.
\end{rmk}

Consider  the category~$\LF_+$ of indecomposable matroids with a disjoint distinguished point~$*$. By Example~\ref{ex:catisos}, the isomorphisms of matroids yield a category with covering families that we denote  by 
\[
\LF^{\cong}\coloneqq(\LF_+,\mathcal{S}_{\LF}^{\cong},\ast) \ .
\]
 Observe  that, because of Remark~\ref{rem:IMisminorclosed}, Tutte covering families of  indecomposable matroids yield  a covering family $\mathcal{S}_{\LF}^{\tc}$ in $\LF$, which we still call the Tutte coverings. We get a category with covering families
 \[
 \LF^{\tc}\coloneqq (\LF_+,\mathcal{S}_{\LF}^{\tc},\ast) \ .
 \]
 \begin{rmk}
    The inclusion functors $\LF^{\cong}\to\Mat^{\cong}$, $\LF^{\tc}\to\Graph^{\tc}$ and $\LF^{\tc}\to\Mat^{\tc}$ are all morphisms of categories with covering families. All domain categories are full in the codomain.
\end{rmk}

    Consider the category with covering families $\LF^{\tc}$. By definition, a covering family in $\LF^{\tc}$ is either of the form $\{\ast\to\ast\}_I$ or it is a singleton of the form $\{A\xrightarrow{\cong} C\}$. As a consequence, the category with covering families $\LF^{\tc}$ is in fact equivalent to the category with covering families given by the isomorphisms, that is $\LF^{\cong}$.
As a consequence of this observation we have the following:

\begin{theorem}\label{thm:iso}
    There are  equivalences of spectra
    \[
    K(\iota_1)\colon K(\LF^{\cong})\to K(\Mat^{\tc})
    \]
    and 
    \[
    K(\iota_2)\colon K(\LF^{\cong})\to K(\Graph^{\tc})
    \]
    where $\iota_1$ and $\iota_2$ are the inclusion functors.
\end{theorem}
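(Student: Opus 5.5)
The plan is to factor each inclusion through $\LF^{\tc}$ and apply the Dévissage Theorem~\ref{thm:devissage}. First, $\LF^{\cong}$ and $\LF^{\tc}$ have the same underlying category $\LF_+$ and the same covering families. An indecomposable matroid has no non-degenerate element, so any deletion-contraction tree rooted at it has no branching vertices. By item~\eqref{def:DCtreeitem2} of Definition~\ref{defn:DCtree}, together with the fact that deletion-contraction trees are only required to be naturally isomorphic to elementary ones, the Tutte coverings of $A\in\LF$ are exactly the singletons $\{A'\xrightarrow{\cong}A\}$. Conversely, every isomorphism is such a covering. Hence the identity functor $\LF^{\cong}\to\LF^{\tc}$ is an isomorphism of categories with covering families, and it induces an equivalence on $K$-theory. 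We can therefore write $\iota_1$ as the composite $\LF^{\cong}=\LF^{\tc}\hookrightarrow\Mat^{\tc}$, and similarly $\iota_2$ as $\LF^{\cong}=\LF^{\tc}\hookrightarrow\Graph^{\tc}$. It then suffices to show that the inclusions of $\LF^{\tc}$ induce equivalences.

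For $\Mat^{\tc}$ we verify the hypotheses of Theorem~\ref{thm:devissage} in three steps.
\begin{enumerate}
\item $\Mat^{\tc}$ has refinements by Corollary~\ref{cor:refinements}.
\item $\LF^{\tc}\subseteq\Mat^{\tc}$ is a full subcategory with covering families. This is the preceding remark: the inclusion is full and preserves covering families.
\item Every object has a cover by objects of $\LF$. For a matroid $M\neq\ast$ this is Proposition~\ref{prop:exindec}. For the basepoint we use the empty family $\{\ast\to\ast\}_{\emptyset}$, or the singleton $\{\ast\to\ast\}$, noting that $\ast\in\LF_+$.
\end{enumerate}
Dévissage then gives $K(\LF^{\tc})\simeq K(\Mat^{\tc})$.

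For $\Graph^{\tc}$ the same argument applies, and the one genuine point is to check that $\Graph^{\tc}$ has refinements. Here we redo Proposition~\ref{prop:indecrefinement} and Corollary~\ref{cor:refinements} inside $\Graph$. By Proposition~\ref{prop:closuregraphs}, the deletion-contraction trees used to refine a graphic Tutte covering stay within graphic matroids. Indecomposable matroids are graphic, being the matroids of looped forests. The refining maps, including the isomorphisms supplied by Proposition~\ref{prop:indareiso}, are morphisms between graphic matroids, and since $\Graph$ is full in $\Mat$ they lie in $\Graph$. So common refinements exist in $\Graph^{\tc}$. Proposition~\ref{prop:exindec} provides the indecomposable covers, which are covers in $\Graph^{\tc}$ because of this closure. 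Applying Theorem~\ref{thm:devissage} to $\LF^{\tc}\subseteq\Graph^{\tc}$ completes the proof.

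The main obstacle is the first step: the claim that Tutte coverings of indecomposable matroids are exactly the isomorphism singletons. The delicate part is the "naturally isomorphic" clause in Definition~\ref{defn:DCtree}. A tree built from a chain of unary vertices gives the composite of identities, so after a natural isomorphism the leaf map is an isomorphism onto the root. One also has to check that every isomorphism arises this way, taking the one-vertex tree and twisting it by the isomorphism.
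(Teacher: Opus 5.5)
Your identification $\LF^{\cong}=\LF^{\tc}$, your choice of indecomposable covers, and your treatment of the basepoint are fine, and for $\Mat^{\tc}$ your argument is exactly the paper's. The genuine gap is the refinement hypothesis of Theorem~\ref{thm:devissage}. You import it from Corollary~\ref{cor:refinements} and then re-derive it in $\Graph$ by the same reasoning. A refinement needs every structure map to factor \emph{over $M$}, i.e.\ $f'_k=g'_j\circ h_k$. Proposition~\ref{prop:indareiso} only supplies abstract isomorphisms $A'_k\cong B'_{h(k)}$ of the domains, with no compatibility with the maps to $M$.

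This cannot be patched. Take $M=U_{1,2}$ on $\{a,b\}$, which is graphic (two parallel edges). The covering $\{M\setminus a\to M,\ M/a\to M\}$ has both structure maps with image $\{b\}$, and the covering obtained from $b$ has both images equal to $\{a\}$. Morphisms are injective on ground sets, so a common refinement would consist of maps with image in $\{a\}\cap\{b\}=\emptyset$, i.e.\ of empty matroids. But every Tutte covering of $U_{1,2}$ is non-empty and its domains have one or two elements. After one branching no non-degenerate element remains, and the natural isomorphisms in Definition~\ref{defn:DCtree} preserve ground-set size. Hence neither $\Mat^{\tc}$ nor $\Graph^{\tc}$ has refinements. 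Among covers of $U_{1,2}$ by indecomposable matroids, no chain of refinements over $M$ connects the cover coming from $a$ to the one coming from $b$, and this is precisely the connectivity that refinements are supposed to provide in a dévissage argument. So Theorem~\ref{thm:devissage} cannot be invoked as you (and the paper) do, and your step ``D\'evissage then gives $K(\LF^{\tc})\simeq K(\Mat^{\tc})$'' is unsupported. Closing the gap requires a different argument, not a repaired version of this one.

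For the second equivalence your route differs from the paper's. The paper only observes $\iota_1=j\circ\iota_2$, with $j\colon\Graph^{\tc}\to\Mat^{\tc}$, and appeals to naturality; by itself this shows only that $K(\iota_2)$ has a homotopy left inverse. Your direct dévissage for $\Graph^{\tc}$ has the right logical shape. So would dévissage for $j$ followed by two-out-of-three, which would need refinements only in $\Mat^{\tc}$. Both versions, however, still hinge on the refinement property that fails above.

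There is also a small slip in your first step. The one-vertex tree yields only the identity, since its unique leaf map is $\mathcal{G}(\Id_{r_T})$. To realise an arbitrary isomorphism $\phi\colon A\to M$, use a root with a single child $w$ and the natural isomorphism with $\eta_{r_T}=\Id_M$ and $\eta_w=\phi^{-1}$.
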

\begin{proof}
    The category $\Mat^{\tc}$ has refinements by Corollary~\ref{cor:refinements} and $\iota_1\colon \LF^{\cong}\to \Mat^{\tc}$ is the inclusion of a full subcategory with covering families. Every object $M\in \Mat^{\tc}$  has a choice of cover $M_i\to M$ where all $M_i$ are indecomposable matroids by Proposition~\ref{prop:exindec} --  the coverings of the basepoint $\ast$ being the obvious ones, given by  $\{\ast \rightarrow \ast\}$. Then, the first part of the statement follows by Theorem~\ref{thm:devissage}. 
    For the second part of the statement, observe that $\iota_1$ factors through~$\iota_2$. By naturality of the $K$-theory construction, also the second equivalence holds.
\end{proof}

Using this equivalence of spectra, we can now decompose the $K$-theory spectrum of matroids as follows:

\begin{corollary}
    \label{cor:isos}
    There is an equivalence of spectra
    \[
    K(\Mat^{\tc})\simeq \bigvee_{[M]}\Sigma_+^{\infty} B(\mathrm{iso}_{\LF}(M))
    \]
    where the wedge ranges across the isomorphism classes of indecomposable matroids  and $\mathrm{iso}_{\LF}(M)$ is the
group of $\LF$-isomorphisms of  $M$.
\end{corollary}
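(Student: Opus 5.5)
The plan is to compose the equivalence of Theorem~\ref{thm:iso} with the computation of Proposition~\ref{prop:isos}. First, Theorem~\ref{thm:iso} gives an equivalence of spectra $K(\iota_1)\colon K(\LF^{\cong})\xrightarrow{\simeq} K(\Mat^{\tc})$. Second, $\LF^{\cong}=(\LF_+,\mathcal{S}^{\cong}_{\LF},\ast)$ is by construction the category with covering families attached, as in Example~\ref{ex:catisos}, to the small pointed category $\LF_+$. Proposition~\ref{prop:isos} therefore applies directly and gives
\[
K(\LF^{\cong})\simeq \bigvee_{[C]}\Sigma^\infty_+ B(\mathrm{iso}_{\LF_+}(C)) .
\]

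The only point needing care is the indexing of this wedge. In Proposition~\ref{prop:isos} the wedge ranges over isomorphism classes of objects of the underlying category. We must check that this means the non-basepoint objects, i.e.\ the objects of $\LF$. This matches the category of covers $W(\bC)$, whose objects are finite collections of non-distinguished objects. It is also consistent with the $K_0$ computation: there the families $\{\ast\to\ast\}_{i\in I}$, in particular the empty one, force $[\ast]=0$. Since $\Hom(c,\ast)=\emptyset$ for $c\neq\ast$ and $\Hom(\ast,c)$ contributes no isomorphisms, the basepoint is isomorphic to no indecomposable matroid. Hence the remaining isomorphism classes are exactly the isomorphism classes of indecomposable matroids.

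It then remains to identify the automorphism groups. Because $\LF$ is a full subcategory of $\LF_+$, we have $\mathrm{iso}_{\LF_+}(M)=\mathrm{iso}_{\LF}(M)$ for every indecomposable $M$. These are also the matroid automorphisms of $M$ in $\Mat$, since $\LF$ is full in $\Mat$. Composing the two equivalences yields the stated decomposition of $K(\Mat^{\tc})$.

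No serious obstacle is expected. The only subtle step is the bookkeeping of the basepoint in Proposition~\ref{prop:isos}: one must make sure that $\ast$ does not add an extra $\Sigma^\infty_+ B(\{1\})=\mathbb{S}$ summand. I would settle this by appealing to the convention of~\cite{calle2024combinatorial}, where the wedge is over non-basepoint objects, as reflected in the definition of $W(\bC)$.
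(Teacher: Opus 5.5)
Your proposal is correct and follows the paper's proof: apply Proposition~\ref{prop:isos} to $\LF^{\cong}$, then compose with the equivalence $K(\iota_1)\colon K(\LF^{\cong})\to K(\Mat^{\tc})$ of Theorem~\ref{thm:iso}. Your extra check that the basepoint contributes no wedge summand fits the conventions of the cited result; the paper leaves this point implicit.
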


\begin{proof}
    By Proposition~\ref{prop:isos}, we have the equivalence of spectra
    \[
    K(\LF^{\cong})\simeq \bigvee_{[M]}\Sigma_+^{\infty} B(\mathrm{iso}_{\LF}(M))
    \]
    where the wedge ranges across the isomorphism classes of indecomposable matroids  and the group $\mathrm{iso}_{\LF}(M)$ is the
group of isomorphisms of  $M$ in the category $\LF$. The statement follows by Theorem~\ref{thm:iso}.
\end{proof}

We can provide a more explicit computation, but we first need a preliminary lemma. 

\begin{lemma} \label{lemma:autogr}
Let   $M=(X, \mc{I}) \in \LF^{\cong}$ be an indecomposable matroid of rank~$r$, and let $n$ be the cardinality of the ground set $X$. Then 
 \[ \mrm{iso}_{\LF}(M) \cong S_r \times S_{n-r} \ , \]
 where $S_r$ and $S_{n-r}$ denote the symmetric groups on $r$ and $n-r$ elements.
\end{lemma}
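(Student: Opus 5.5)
We first make the structure of an indecomposable matroid explicit. Let $M=(X,\mc{I})$ be indecomposable, so every $x\in X$ is a loop or a coloop. Write $C\subseteq X$ for the set of coloops and $L=X\setminus C$ for the set of loops. We claim that $\mc{I}=\mc{P}(C)$, i.e.\ $M\cong U_{|C|,|C|}\oplus U_{0,|L|}$.

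\textbf{Step 1: $\mc{I}=\mc{P}(C)$ and $r=|C|$.} No independent set contains a loop, by (I2) applied to the singleton. Hence every independent set is contained in $C$. Conversely, every basis contains every coloop, since a coloop lies in every maximal independent set. So a basis contains $C$, and therefore equals $C$. By (I2), $\mc{I}=\mc{P}(C)$. It follows that $r=r_M(X)=|C|$ and $|L|=n-r$.

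\textbf{Step 2: isomorphisms preserve the loop/coloop decomposition.} Let $f\colon X\to X$ be a bijection with $f^{-1}(I)\in\mc{I}$ if and only if $I\in\mc{I}$. Then $f$ sends independent singletons to independent singletons, so $f(C)=C$, since by Step 1 the independent singletons are exactly those in $C$. Consequently $f(L)=L$. This gives an injective group homomorphism
\[
\mrm{iso}_{\LF}(M)\to S_C\times S_L\cong S_r\times S_{n-r},\qquad f\mapsto (f|_C,f|_L).
\]
Here we use that $\LF$ is a full subcategory of $\Mat$, so its isomorphisms are exactly the matroid isomorphisms.

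\textbf{Step 3: surjectivity.} Conversely, take any pair $(\sigma,\tau)\in S_C\times S_L$ and glue it to a bijection $f=\sigma\sqcup\tau$ of $X$. For $I\subseteq X$, we have $I\subseteq C$ if and only if $f(I)\subseteq C$, and likewise for preimages. By Step 1, $f$ therefore preserves and reflects independence. So $f$ is an isomorphism in $\Mat$, and hence in $\LF$. Compatibility with composition is immediate, so the map of Step 2 is a group isomorphism.

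The only point needing care is Step 1. It relies on the fact that every element of $M$ is degenerate, which is exactly Definition~\ref{def:idencmatroids}. One must also check the edge cases $C=\emptyset$ or $L=\emptyset$, including the empty matroid, where the relevant symmetric groups are trivial. Everything else is routine.
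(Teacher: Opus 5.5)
Your proof is correct and follows the paper's route: split $X$ into coloops $C$ and loops $L$, observe that $C$ is the unique basis (so $\mathcal{I}=\mathcal{P}(C)$), and note that any automorphism must preserve both parts. You are slightly more complete than the paper, which argues only necessity and leaves implicit your Step~3, that every pair $(\sigma,\tau)\in S_C\times S_L$ glues to an automorphism.
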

\proof 
 The elements of $\mrm{iso}_{\LF}(M)$ are exactly the automorphisms of $M$ in the category $\Mat$, i.e.~bijective maps $\varphi\colon X \rightarrow X$ such that for any $I \in \mc{I}$ we have that $\varphi (I) \in \mc{I}$. 

Recall that a matroid in $\LF$ has no non-degenerate elements. As a consequence,  $X=X_c \sqcup X_\ell$, where $X_c$ and $X_\ell$ are the sets of coloops and loops of $M$, respectively. Observe that, by definition, $X_c \in \mc{I}$ and it is the unique bases of $M$. In particular we have that $|X_c|=r$ and $|X_\ell|=n-r$.
Furthermore, note that  the condition 
$I \in \mc{I} \Longrightarrow \varphi (I) \in \mc{I}$  is satisfied only if $\varphi({\{x\}}) \in X_i$ for every $x \in X_c$, i.e.~$\varphi$  acts as a permutation of $S_r$ when restricted to $X_c$. Analogously, this condition forces  $\varphi({\{x\}})$ to be in $  X_\ell$ for any $x \in X_\ell$. Then, $\varphi$ acts as an element of $S_{n-r}$ over $X_\ell$. The statement follows.
\endproof 
\begin{rmk}\label{rmk:decindecomposable} We remark that, if $M$ is an indecomposable matroid, then its rank is the number of its coloops and $n-r$ is the number of its loops. In particular $M$ can be written in the form $M\simeq U_{0,1}^{\oplus n-r} \oplus U_{1,1}^{\oplus r}$, where $U_{k,h}$ is the uniform matroid of cardinality $k$ subsets over a set of $h$ elements. 
Furthermore, observe that for any $m,n \in \N$ the matroid $M\simeq U_{0,1}^{\oplus m} \oplus U_{1,1}^{\oplus n}$ is an indecomposable matroid with exactly $m$ loops and $n$ coloops.
\end{rmk}

We observe here that there is an equivalence of spectra
\[
\Sigma_+^{\infty} B(G\times H)\simeq \Sigma_+^{\infty} B(G)\wedge  \Sigma_+^{\infty} B(H)
\]
whenever $G$ and $H$ are finite groups. In fact, first we have an homotopy equivalence of classifying spaces $B(G\times H)\simeq B(G)\times B(H)$.  Further, if $X,Y$ are topological spaces there is a homeomorphism $(X\times Y)_+\cong X_+\wedge Y_+$ of based spaces; see e.g.~\cite[Construction~3.3.14 \& Lemma~3.3.16]{riehl}, applied to the convenient category of compactly generated spaces~$\mathbf{CGTop}$. This implies that the associated category of based spaces~$\mathbf{CGTop}_+$ is closed symmetric monoidal with respect to the smash
product~\cite[Theorem~6.1.12]{riehl}.  Taking the suspension spectrum of a product of based spaces yields a strong symmetric monoidal functor (with respect to the smash product of spectra) -- cf.~\cite[Section~4.8.2]{lurie2017higher}.  As a consequence of this fact, we get the description of the $K$-theory of the category of matroids with Tutte coverings as a wedge decomposition of suspension spectra (cf.~\cite[Example~2.20]{zbMATH07985809}):

\begin{corollary}\label{cor:splitting}
      There are equivalences of spectra
    \[
    K(\Mat^{\tc})\simeq \bigvee_{m,n\in\mathbb{N}} \Sigma_+^{\infty} B(S_m)\wedge \Sigma_+^{\infty} B(S_n)
    \simeq  K(\Graph^{\tc}) \ . \]
\end{corollary}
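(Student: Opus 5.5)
The plan is to combine Corollary~\ref{cor:isos} with a parametrization of the isomorphism classes of indecomposable matroids and with Lemma~\ref{lemma:autogr}. By Corollary~\ref{cor:isos},
\[
K(\Mat^{\tc})\simeq \bigvee_{[M]}\Sigma_+^{\infty} B(\mathrm{iso}_{\LF}(M)),
\]
where $[M]$ runs over isomorphism classes of indecomposable matroids. The task is therefore to identify this index set with $\N\times\N$ and to rewrite each wedge summand.

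\textbf{Indexing set.} By Remark~\ref{rmk:decindecomposable}, every indecomposable $M$ is isomorphic to $U_{0,1}^{\oplus m}\oplus U_{1,1}^{\oplus n}$, where $m$ is the number of loops and $n$ the number of coloops. Every pair $(m,n)\in\N\times\N$ arises this way; the case $m=n=0$ is the empty matroid, which is a non-basepoint object of $\Mat$. It remains to show the assignment $[M]\mapsto(m,n)$ is injective. Any matroid isomorphism sends loops to loops, since $\{x\}$ is dependent if and only if its image is. It also sends coloops to coloops, because it preserves bases. Hence the numbers of loops and coloops are isomorphism invariants, and conversely two indecomposables with equal counts are isomorphic via any bijection matching loops to loops and coloops to coloops. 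So $[M]\mapsto(m,n)$ is a bijection onto $\N\times\N$.

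\textbf{Summands.} By Lemma~\ref{lemma:autogr}, with $r=n$ the number of coloops and $n-r=m$ the number of loops, we get $\mathrm{iso}_{\LF}(M)\cong S_n\times S_m$. Using the equivalence $\Sigma^\infty_+B(G\times H)\simeq\Sigma^\infty_+BG\wedge\Sigma^\infty_+BH$ for finite groups recalled just before the statement, together with the symmetry of the smash product, each summand becomes $\Sigma_+^\infty B(S_m)\wedge\Sigma_+^\infty B(S_n)$. Wedging over $(m,n)$ gives the first equivalence.

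\textbf{Graphic case.} The second equivalence follows the same way: Theorem~\ref{thm:iso} gives $K(\LF^{\cong})\simeq K(\Graph^{\tc})$, so $K(\Graph^{\tc})$ is identified with the same wedge decomposition of $K(\LF^{\cong})$.

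\textbf{Main obstacle.} The only substantive step is the classification of isomorphism classes. One point needs care: Lemma~\ref{lemma:autogr} uses automorphisms in the sense of $\Mat$ (bijections preserving independence), whereas the isomorphisms counted in Proposition~\ref{prop:isos} are categorical ones in $\LF$. For finite matroids these agree, since a bijection that maps independents to independents and has the same inverse property is exactly an invertible morphism. With that checked, the rest is routine bookkeeping.
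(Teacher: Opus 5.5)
Your proposal is correct and follows the paper's proof. You combine Corollary~\ref{cor:isos}, the loop/coloop parametrization of Remark~\ref{rmk:decindecomposable}, Lemma~\ref{lemma:autogr}, the splitting $\Sigma^\infty_+B(G\times H)\simeq\Sigma^\infty_+BG\wedge\Sigma^\infty_+BH$, and Theorem~\ref{thm:iso} for the graphic case; your explicit check that the loop and coloop counts are isomorphism invariants (so the wedge is indexed bijectively by $\N\times\N$, including the empty matroid) fills in a step the paper leaves implicit.
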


\begin{proof}
As $K(\Mat^{\tc})$ is equivalent to $K(\LF^{\cong})$, and this is equivalent to $\Sigma_+^{\infty} B(\mathrm{iso}_{\LF}(M))$, we only neeed to specify the automorphism groups 
    $\mathrm{iso}_{\LF}(M)$ for indecomposable matroids. 
  Recall that, by Remark~\ref{rmk:decindecomposable}, an indecomposable matroid can be written in the form $U_{0,1}^{\oplus m} \oplus U_{1,1}^{\oplus n}$, where $m$ and $n$ are the number of loops and coloops, respectively.
    Hence, we get an equivalence of spectra 
       \[
    K(\Mat^{\tc}) \simeq \bigvee_{m,n\in\mathbb{N}}\Sigma_+^{\infty} B(S_m\times S_n) \ .
    \]
 Now, using the   decomposition
    \[
\Sigma_+^{\infty} B(S_m\times S_n)\simeq \Sigma_+^{\infty} B(S_m)\wedge  \Sigma_+^{\infty} B(S_n) \ ,
\]
the result follows.
\end{proof}

As a consequence of the splitting in $K$-theory of Corollary~\ref{cor:splitting}, we get that $K_0(\Mat^{\tc})$ is isomorphic to $\bigoplus_{m,n\in \mathbb{N}}\mathbb{Z}$, as established in \cite{lopez2025realizing}, and used to realise the universal Tutte-Grothendieck invariant
for matroids as the $K_0$-homomorphism induced by a map of $K$-theory spectra. From the same splitting we also have that all higher homotopy groups  encode the stable homology groups of the symmetric groups. Furthermore, we can depict the relation between the $K$-theory of $\Mat^{\cong}$ and $\Mat^{\tc}$ in the following commutative diagram:
\begin{center}
    \begin{tikzcd}
 K(\Mat^{\cong})\arrow[r,""] & K(\Mat^{\tc})\\
 K(\LF^{\cong})\arrow[u,""]\arrow[r, "\cong"]  \arrow[ur,dashed, "\simeq"]   &K(\LF^{\tc})\arrow[u,"\simeq"]
 \end{tikzcd}  
\end{center}
The induced map in $\pi_0$, in the upper part of the diagram, was studied in \cite{lopez2025realizing} to realise the universal Tutte-Grothendieck invariant
for matroids. We ask the following  general question:

\begin{question}
    Is there a categorical interpretation of evaluations of the Tutte polynomial?
\end{question}

We conclude with the description of the $K$-theory of matroids with a further structure reflecting duality of matroids. Let $C_2$ be the cyclic group with two elements:

\begin{corollary}\label{cor:dual}
    The $K$-theory spectra $K(\LF^{\cong})$ and $K(\Mat^{\tc})$ are equivalent as $C_2$-spectra. 
\end{corollary}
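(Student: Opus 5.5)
The plan is to make duality act on both categories with covering families and check that the inclusion $\iota_1\colon \LF^{\cong}\to\Mat^{\tc}$ of Theorem~\ref{thm:iso} commutes with these actions. A $C_2$-action on a category with covering families is an involutive automorphism in $\mathbf{CatFam}$. By functoriality of $W(-)$ (\cite[Lemma~2.16]{zbMATH07985809}) and of the $K$-theory construction, such an automorphism induces a $C_2$-action on the spectrum. A $C_2$-equivariant map whose underlying map is an equivalence is then an equivalence of $C_2$-spectra, at least in the naive (Borel) sense. The underlying equivalence is already supplied by Theorem~\ref{thm:iso}.

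\textbf{Step 1: the duality involution.} Define $D\colon \Mat_+\to\Mat_+$ on objects by $D(M)=M^*$ and $D(\ast)=\ast$. On morphisms this is delicate, since an injective map $\varphi\colon M_1\to M_2$ need not send independent sets of $M_1^*$ to independent sets of $M_2^*$. I would therefore restrict to the morphisms that are actually used:
\begin{itemize}
\item isomorphisms, for which $\varphi$ is an isomorphism $M_1^*\to M_2^*$ because bases correspond under $\varphi$;
\item composites of the standard minor inclusions of Remark~\ref{rmk:DCmorphisms}.
\end{itemize}
Concretely, I would replace $\Mat_+$ by the wide subcategory generated by these maps. Its $K$-theory agrees with that of $\Mat^{\tc}$, since covering families only involve such maps and the $K$-theory is computed from $W$. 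Alternatively, one can observe that $W(\Mat^{\tc})$ only sees covering maps. On this subcategory $D$ acts as the identity on underlying sets, because $(M\setminus e)^*=M^*/e$ and $(M/e)^*=M^*\setminus e$. Using $M^{**}=M$, we get $D^2=\Id$.

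\textbf{Step 2: $D$ preserves covering families.} An element $e$ is non-degenerate in $M$ if and only if it is non-degenerate in $M^*$, since loops and coloops are exchanged by duality. Hence applying $D$ to an elementary deletion-contraction tree of shape $T$ produces an elementary deletion-contraction tree of the same shape, with the roles of the two children swapped. Definition~\ref{defn:DCtree} explicitly allows either order. Natural isomorphisms are preserved as well, so $D$ sends Tutte coverings to Tutte coverings, and isomorphism coverings to isomorphism coverings. By Remark~\ref{rem:IMisminorclosed}, $\LF$ is closed under duals, so $D$ restricts to $\LF^{\cong}$: it swaps loops and coloops, taking $U_{0,1}^{\oplus m}\oplus U_{1,1}^{\oplus n}$ to $U_{0,1}^{\oplus n}\oplus U_{1,1}^{\oplus m}$. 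Moreover $\iota_1\circ D=D\circ\iota_1$ strictly, so $K(\iota_1)$ is $C_2$-equivariant.

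\textbf{Step 3: conclusion.} By Theorem~\ref{thm:iso}, $K(\iota_1)$ is an underlying equivalence, and it is $C_2$-equivariant by Step 2, which yields the corollary. Under the splitting of Corollary~\ref{cor:splitting}, the action exchanges the $(m,n)$ and $(n,m)$ summands and swaps the two smash factors.

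The main obstacle is Step 1: making duality a genuine functor given that general matroid morphisms are not preserved. The cleanest fix is to pass to the covering-relevant wide subcategory, checking that this leaves $K$ unchanged, or simply to define the $C_2$-structure through $W(-)$. A secondary point is the meaning of ``equivalence of $C_2$-spectra''. If genuine fixed points are required, one would additionally apply Dévissage (Theorem~\ref{thm:devissage}) to the fixed subcategories of self-dual objects; refinements there follow from the proof of Corollary~\ref{cor:refinements}. I would state the result in the naive/Borel sense.
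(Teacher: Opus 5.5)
Your proposal is correct and follows essentially the paper's route: a strict duality involution preserves Tutte coverings (because non-degeneracy is self-dual and $(M/e)^*=M^*\setminus e$) and isomorphism coverings, and it commutes strictly with $\iota_1$, so naturality of $K$ together with Theorem~\ref{thm:iso} gives an equivariant underlying equivalence. Your restriction to the wide subcategory generated by isomorphisms and minor inclusions is a genuine improvement. The paper simply asserts that $\Mat$ and $\LF$ are categories with duality, which fails for arbitrary morphisms: the morphism $U_{0,1}\to U_{1,1}$ has no dual morphism $U_{1,1}\to U_{0,1}$. Your observation that $W(-)$, and hence $K(-)$, only involves maps appearing in covering families shows the restriction costs nothing. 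Likewise, your naive/Borel caveat describes exactly what this argument, yours and the paper's, actually proves.
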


\begin{proof}
    Consider the category $\Mat$. This is a category with duality, the duality being given by taking dual matroids. Also the category $\LF$ is a category with duality by Remark~\ref{rem:IMisminorclosed}. Since an element $x$ of a matroid $M$ is non-degenerate if and only if $x$ is non-degenerate in the dual matroid $M^*$, it yields duality functors
    \[
    D_{\Mat^{\tc}}\colon \Mat^{\tc}\to\Mat^{\tc} \text{ and }     D_{\LF^{\cong}}\colon \LF^{\cong}\to\LF^{\cong}
    \]
    which are  morphisms of categories with covering families. That is, the category with Tutte coverings $\Mat^{\tc}$ and the category $\LF^{\cong}$ are $C_2$-categories with covering families (cf.~\cite[Definition~4.1]{zbMATH07985809}). Note that the duality functors commute with the inclusion functor $   \iota_1\colon \LF^{\cong}\to \Mat^{\tc}$, that is the diagram 
    \begin{equation}\label{eq:diag}
 \begin{tikzcd}
 \LF^{\cong}\arrow[r,"\iota_1"]\arrow[d,"D_{\LF^{\cong}}"'] & \Mat^{\tc}\arrow[d,"D_{\Mat^{\tc}}"]\\
 \LF^{\cong}\arrow[r, "\iota_1"]   &\Mat^{\tc}
 \end{tikzcd}    
  \end{equation}
 is commutative. We now focus on the left part of this diagram. The duality functor $D_{\LF^{\cong}}$ induces an equivalence
    \[
    \Sigma_+^{\infty} B(S_m)\wedge \Sigma_+^{\infty} B(S_n)\to \Sigma_+^{\infty} B(S_n)\wedge \Sigma_+^{\infty} B(S_m) 
    \]
    for all $m,n\in \mathbb{N}$, and  it yields a spectrum involution. This  extends to the whole spectrum  $\bigvee_{m,n\in\mathbb{N}} \Sigma_+^{\infty} B(S_m)\wedge \Sigma_+^{\infty} B(S_n)$ which inherits the structure of a $C_2$-spectrum. Likewise, also $K(\Mat^{\tc})$ inherits the structure of a $C_2$-spectrum from the duality functor $D_{\Mat^{\tc}}$. By naturality of the $K$-theory construction applied to the  commutative diagram~\eqref{eq:diag}, the $C_2$-actions are coherent and the equivalence of Theorem~\ref{thm:iso} can be promoted to an equivalence of $C_2$-spectra.
\end{proof}
\bibliographystyle{alpha}
\bibliography{bibliography}
\end{document}